\date{}
\newtheorem{ass}{Assumption}
\def \R{I\!\!R}
\chardef\@x10\chardef\@xv60
\def\tcitime{
\def\@time{%
  \@minute\time\@hour\@minute\divide\@hour\@xv
  \ifnum\@hour<\@x 0\fi\the\@hour:%
  \multiply\@hour\@xv\advance\@minute-\@hour
  \ifnum\@minute<\@x 0\fi\the\@minute
  }}%
\def\QCTOpt[#1]#2{%
  \def\QCTOptB{#1}
  \def\QCTOptA{#2}
}
\def\QCTNOpt#1{%
  \def\QCTOptA{#1}
  \let\QCTOptB\empty
}
\def\Qct{%
  \@ifnextchar[{%
    \QCTOpt}{\QCTNOpt}
}
\def\QCBOpt[#1]#2{%
  \def\QCBOptB{#1}
  \def\QCBOptA{#2}
}
\def\QCBNOpt#1{%
  \def\QCBOptA{#1}
  \let\QCBOptB\empty
}
\def\Qcb{%
  \@ifnextchar[{%
    \QCBOpt}{\QCBNOpt}
}
\def\PrepCapArgs{%
  \ifx\QCBOptA\empty
    \ifx\QCTOptA\empty
      {}%
    \else
      \ifx\QCTOptB\empty
        {\QCTOptA}%
      \else
        [\QCTOptB]{\QCTOptA}%
      \fi
    \fi
  \else
    \ifx\QCBOptA\empty
      {}%
    \else
      \ifx\QCBOptB\empty
        {\QCBOptA}%
      \else
        [\QCBOptB]{\QCBOptA}%
      \fi
    \fi
  \fi
}
\def\GRAPHICSPS#1{%
 \ifcase\GRAPHICSTYPE%\GRAPHICSTYPE=0
   \special{ps: #1}%
 \or%\GRAPHICSTYPE=1
   \special{language "PS", include "#1"}%
%%%\or%\GRAPHICSTYPE=2
%%%  #1%
 \fi
}%
\def\graffile#1#2#3#4{%
%%% \ifnum\GRAPHICSTYPE=\tw@
%%%  %Following if using psfig
%%%  \@ifundefined{psfig}{\input psfig.tex}{}%
%%%  \psfig{file=#1, height=#3, width=#2}%
%%% \else
  %Following for all others
  % JCS - added BOXTHEFRAME, see below
    \leavevmode
    \raise -#4 \BOXTHEFRAME{%
        \hbox to #2{\raise #3\hbox to #2{\null #1\hfil}}}%
}%
\def\draftbox#1#2#3#4{%
 \leavevmode\raise -#4 \hbox{%
  \frame{\rlap{\protect\tiny #1}\hbox to #2%
   {\vrule height#3 width\z@ depth\z@\hfil}%
  }%
 }%
}%
\newif\ifwasdraft
\def\GRAPHIC#1#2#3#4#5{%
 \ifnum\draft=\@ne\draftbox{#2}{#3}{#4}{#5}%
  \else\graffile{#1}{#3}{#4}{#5}%
  \fi
 }%
\def\addtoLaTeXparams#1{%
    \edef\LaTeXparams{\LaTeXparams #1}}%
\newif\ifBoxFrame \BoxFramefalse
\newif\ifOverFrame \OverFramefalse
\newif\ifUnderFrame \UnderFramefalse
\def\BOXTHEFRAME#1{%
   \hbox{%
      \ifBoxFrame
         \frame{#1}%
      \else
         {#1}%
      \fi
   }%
}
\def\doFRAMEparams#1{\BoxFramefalse\OverFramefalse\UnderFramefalse\readFRAMEparams#1\end}%
\def\readFRAMEparams#1{%
 \ifx#1\end%
  \let\next=\relax
  \else
  \ifx#1i\dispkind=\z@\fi
  \ifx#1d\dispkind=\@ne\fi
  \ifx#1f\dispkind=\tw@\fi
  \ifx#1t\addtoLaTeXparams{t}\fi
  \ifx#1b\addtoLaTeXparams{b}\fi
  \ifx#1p\addtoLaTeXparams{p}\fi
  \ifx#1h\addtoLaTeXparams{h}\fi
  \ifx#1X\BoxFrametrue\fi
  \ifx#1O\OverFrametrue\fi
  \ifx#1U\UnderFrametrue\fi
  \ifx#1w
    \ifnum\draft=1\wasdrafttrue\else\wasdraftfalse\fi
    \draft=\@ne
  \fi
  \let\next=\readFRAMEparams
  \fi
 \next
 }%
\def\IFRAME#1#2#3#4#5#6{%
      \bgroup
      \let\QCTOptA\empty
      \let\QCTOptB\empty
      \let\QCBOptA\empty
      \let\QCBOptB\empty
      #6%
      \parindent=0pt%
      \leftskip=0pt
      \rightskip=0pt
      \setbox0 = \hbox{\QCBOptA}%
      \@tempdima = #1\relax
      \ifOverFrame
          % Do this later
          \typeout{This is not implemented yet}%
          \show\HELP
      \else
         \ifdim\wd0>\@tempdima
            \advance\@tempdima by \@tempdima
            \ifdim\wd0 >\@tempdima
               \textwidth=\@tempdima
               \setbox1 =\vbox{%
                  \noindent\hbox to \@tempdima{\hfill\GRAPHIC{#5}{#4}{#1}{#2}{#3}\hfill}\\%
                  \noindent\hbox to \@tempdima{\parbox[b]{\@tempdima}{\QCBOptA}}%
               }%
               \wd1=\@tempdima
            \else
               \textwidth=\wd0
               \setbox1 =\vbox{%
                 \noindent\hbox to \wd0{\hfill\GRAPHIC{#5}{#4}{#1}{#2}{#3}\hfill}\\%
                 \noindent\hbox{\QCBOptA}%
               }%
               \wd1=\wd0
            \fi
         \else
            %\show\BBB
            \ifdim\wd0>0pt
              \hsize=\@tempdima
              \setbox1 =\vbox{%
                \unskip\GRAPHIC{#5}{#4}{#1}{#2}{0pt}%
                \break
                \unskip\hbox to \@tempdima{\hfill \QCBOptA\hfill}%
              }%
              \wd1=\@tempdima
           \else
              \hsize=\@tempdima
              \setbox1 =\vbox{%
                \unskip\GRAPHIC{#5}{#4}{#1}{#2}{0pt}%
              }%
              \wd1=\@tempdima
           \fi
         \fi
         \@tempdimb=\ht1
         \advance\@tempdimb by \dp1
         \advance\@tempdimb by -#2%
         \advance\@tempdimb by #3%
         \leavevmode
         \raise -\@tempdimb \hbox{\box1}%
      \fi
      \egroup%
}%
\def\DFRAME#1#2#3#4#5{%
 \begin{center}
     \let\QCTOptA\empty
     \let\QCTOptB\empty
     \let\QCBOptA\empty
     \let\QCBOptB\empty
     \ifOverFrame 
        #5\QCTOptA\par
     \fi
     \GRAPHIC{#4}{#3}{#1}{#2}{\z@}
     \ifUnderFrame 
        \nobreak\par #5\QCBOptA
     \fi
 \end{center}%
 }%
\def\FFRAME#1#2#3#4#5#6#7{%
 \begin{figure}[#1]%
  \let\QCTOptA\empty
  \let\QCTOptB\empty
  \let\QCBOptA\empty
  \let\QCBOptB\empty
  \ifOverFrame
    #4
    \ifx\QCTOptA\empty
    \else
      \ifx\QCTOptB\empty
        \caption{\QCTOptA}%
      \else
        \caption[\QCTOptB]{\QCTOptA}%
      \fi
    \fi
    \ifUnderFrame\else
      \label{#5}%
    \fi
  \else
    \UnderFrametrue%
  \fi
  \begin{center}\GRAPHIC{#7}{#6}{#2}{#3}{\z@}\end{center}%
  \ifUnderFrame
    #4
    \ifx\QCBOptA\empty
      \caption{}%
    \else
      \ifx\QCBOptB\empty
        \caption{\QCBOptA}%
      \else
        \caption[\QCBOptB]{\QCBOptA}%
      \fi
    \fi
    \label{#5}%
  \fi
  \end{figure}%
 }%
\def\makeactives{
  \catcode`\"=\active
  \catcode`\;=\active
  \catcode`\:=\active
  \catcode`\'=\active
  \catcode`\~=\active
}
   \gdef\activesoff{%
      \def"{\string"}
      \def;{\string;}
      \def:{\string:}
      \def'{\string'}
      \def~{\string~}
      %\bbl@deactivate{"}%
      %\bbl@deactivate{;}%
      %\bbl@deactivate{:}%
      %\bbl@deactivate{'}%
    }
\def\FRAME#1#2#3#4#5#6#7#8{%
 \bgroup
 \@ifundefined{bbl@deactivate}{}{\activesoff}
 \ifnum\draft=\@ne
   \wasdrafttrue
 \else
   \wasdraftfalse%
 \fi
 \def\LaTeXparams{}%
 \dispkind=\z@
 \def\LaTeXparams{}%
 \doFRAMEparams{#1}%
 \ifnum\dispkind=\z@\IFRAME{#2}{#3}{#4}{#7}{#8}{#5}\else
  \ifnum\dispkind=\@ne\DFRAME{#2}{#3}{#7}{#8}{#5}\else
   \ifnum\dispkind=\tw@
    \edef\@tempa{\noexpand\FFRAME{\LaTeXparams}}%
    \@tempa{#2}{#3}{#5}{#6}{#7}{#8}%
    \fi
   \fi
  \fi
  \ifwasdraft\draft=1\else\draft=0\fi{}%
  \egroup
 }%
\def\TEXUX#1{"texux"}
\long\def\QQQ#1#2{%
     \long\expandafter\def\csname#1\endcsname{#2}}%
\long\def\QQA#1#2{}%
\def\QTR#1#2{{\csname#1\endcsname #2}}%(gp) Is this the best?
\def\EXPAND#1[#2]#3{}%
\def\NOEXPAND#1[#2]#3{}%
\def\LaTeXparent#1{}%
\def\ChildStyles#1{}%
\def\ChildDefaults#1{}%
\def\QTagDef#1#2#3{}%
\def\QQfnmark#1{\footnotemark}
\def\makeatletter\input gnuindex.sty\makeatother\makeindex{\makeatletter\input gnuindex.sty\makeatother\makeindex}%	
\def\initial#1{\bigbreak{\raggedright\large\bf #1}\kern 2\p@\penalty3000}}%
 \def\abstract{%
  \if@twocolumn
   \section*{Abstract (Not appropriate in this style!)}%
   \else \small 
   \begin{center}{\bf Abstract\vspace{-.5em}\vspace{\z@}}\end{center}%
   \quotation 
   \fi
  }%
   \def\registered{\relax\ifmmode{}\r@gistered
                    \else$\m@th\r@gistered$\fi}%
 \def\r@gistered{^{\ooalign
  {\hfil\raise.07ex\hbox{$\scriptstyle\rm\text{R}$}\hfil\crcr
  \mathhexbox20D}}}}{}%
\newdimen\theight
\def\Column{%
 \vadjust{\setbox\z@=\hbox{\scriptsize\quad\quad tcol}%
  \theight=\ht\z@\advance\theight by \dp\z@\advance\theight by \lineskip
  \kern -\theight \vbox to \theight{%
   \rightline{\rlap{\box\z@}}%
   \vss
   }%
  }%
 }%
\def\qed{%
 \ifhmode\unskip\nobreak\fi\ifmmode\ifinner\else\hskip5\p@\fi\fi
 \hbox{\hskip5\p@\vrule width4\p@ height6\p@ depth1.5\p@\hskip\p@}%
 }%
\def\miss{\hbox{\vrule height2\p@ width 2\p@ depth\z@}}%
\def\tcol#1{{\baselineskip=6\p@ \vcenter{#1}} \Column}  %
\def\newfmtname{LaTeX2e}
\def\chkcompat{%
   \if@compatibility
   \else
     \usepackage{latexsym}
   \fi
}
  \DeclareOldFontCommand{\rm}{\normalfont\rmfamily}{\mathrm}
  \DeclareOldFontCommand{\sf}{\normalfont\sffamily}{\mathsf}
  \DeclareOldFontCommand{\tt}{\normalfont\ttfamily}{\mathtt}
  \DeclareOldFontCommand{\bf}{\normalfont\bfseries}{\mathbf}
  \DeclareOldFontCommand{\it}{\normalfont\itshape}{\mathit}
  \DeclareOldFontCommand{\sl}{\normalfont\slshape}{\@nomath\sl}
  \DeclareOldFontCommand{\sc}{\normalfont\scshape}{\@nomath\sc}
\def\alpha{{\Greekmath 010B}}%
\def\beta{{\Greekmath 010C}}%
\def\gamma{{\Greekmath 010D}}%
\def\delta{{\Greekmath 010E}}%
\def\epsilon{{\Greekmath 010F}}%
\def\zeta{{\Greekmath 0110}}%
\def\eta{{\Greekmath 0111}}%
\def\theta{{\Greekmath 0112}}%
\def\iota{{\Greekmath 0113}}%
\def\kappa{{\Greekmath 0114}}%
\def\lambda{{\Greekmath 0115}}%
\def\mu{{\Greekmath 0116}}%
\def\nu{{\Greekmath 0117}}%
\def\xi{{\Greekmath 0118}}%
\def\pi{{\Greekmath 0119}}%
\def\rho{{\Greekmath 011A}}%
\def\sigma{{\Greekmath 011B}}%
\def\tau{{\Greekmath 011C}}%
\def\upsilon{{\Greekmath 011D}}%
\def\phi{{\Greekmath 011E}}%
\def\chi{{\Greekmath 011F}}%
\def\psi{{\Greekmath 0120}}%
\def\omega{{\Greekmath 0121}}%
\def\varepsilon{{\Greekmath 0122}}%
\def\vartheta{{\Greekmath 0123}}%
\def\varpi{{\Greekmath 0124}}%
\def\varrho{{\Greekmath 0125}}%
\def\varsigma{{\Greekmath 0126}}%
\def\varphi{{\Greekmath 0127}}%
\def\nabla{{\Greekmath 0272}}
\def\FindBoldGroup{%
   {\setbox0=\hbox{$\mathbf{x\global\edef\theboldgroup{\the\mathgroup}}$}}%
}
\def\Greekmath#1#2#3#4{%
    \if@compatibility
        \ifnum\mathgroup=\symbold
           \mathchoice{\mbox{\boldmath$\displaystyle\mathchar"#1#2#3#4$}}%
                      {\mbox{\boldmath$\textstyle\mathchar"#1#2#3#4$}}%
                      {\mbox{\boldmath$\scriptstyle\mathchar"#1#2#3#4$}}%
                      {\mbox{\boldmath$\scriptscriptstyle\mathchar"#1#2#3#4$}}%
        \else
           \mathchar"#1#2#3#4% 
        \fi 
    \else 
        \FindBoldGroup
        \ifnum\mathgroup=\theboldgroup % For 2e
           \mathchoice{\mbox{\boldmath$\displaystyle\mathchar"#1#2#3#4$}}%
                      {\mbox{\boldmath$\textstyle\mathchar"#1#2#3#4$}}%
                      {\mbox{\boldmath$\scriptstyle\mathchar"#1#2#3#4$}}%
                      {\mbox{\boldmath$\scriptscriptstyle\mathchar"#1#2#3#4$}}%
        \else
           \mathchar"#1#2#3#4% 
        \fi     	    
	  \fi}
\newif\ifGreekBold  \GreekBoldfalse
\let\SAVEPBF=\pbf
\def\pbf{\GreekBoldtrue\SAVEPBF}%
  \newcounter{equationnumber}  
  \def\mathletters{%
     \addtocounter{equation}{1}
     \edef\@currentlabel{\theequation}%
     \setcounter{equationnumber}{\c@equation}
     \setcounter{equation}{0}%
     \edef\theequation{\@currentlabel\noexpand\alph{equation}}%
  }
    \def\BibTeX{{\rm B\kern-.05em{\sc i\kern-.025em b}\kern-.08em
                 T\kern-.1667em\lower.7ex\hbox{E}\kern-.125emX}}}{}%
\def\AmS{{\protect\usefont{OMS}{cmsy}{m}{n}%
                A\kern-.1667em\lower.5ex\hbox{M}\kern-.125emS}}}{}%
\let\DOTSI\relax
\def\RIfM@{\relax\ifmmode}%
\def\FN@{\futurelet\next}%
\def\iint{\DOTSI\intno@\tw@\FN@\ints@}%
\def\iiint{\DOTSI\intno@\thr@@\FN@\ints@}%
\def\iiiint{\DOTSI\intno@4 \FN@\ints@}%
\def\idotsint{\DOTSI\intno@\z@\FN@\ints@}%
\def\ints@{\findlimits@\ints@@}%
\newif\iflimtoken@
\newif\iflimits@
\def\findlimits@{\limtoken@true\ifx\next\limits\limits@true
 \else\ifx\next\nolimits\limits@false\else
 \limtoken@false\ifx\ilimits@\nolimits\limits@false\else
 \ifinner\limits@false\else\limits@true\fi\fi\fi\fi}%
\def\multint@{\int\ifnum\intno@=\z@\intdots@                          %1
 \else\intkern@\fi                                                    %2
 \ifnum\intno@>\tw@\int\intkern@\fi                                   %3
 \ifnum\intno@>\thr@@\int\intkern@\fi                                 %4
 \int}%                                                               %5
\def\multintlimits@{\intop\ifnum\intno@=\z@\intdots@\else\intkern@\fi
 \ifnum\intno@>\tw@\intop\intkern@\fi
 \ifnum\intno@>\thr@@\intop\intkern@\fi\intop}%
\def\intic@{%
    \mathchoice{\hskip.5em}{\hskip.4em}{\hskip.4em}{\hskip.4em}}%
\def\negintic@{\mathchoice
 {\hskip-.5em}{\hskip-.4em}{\hskip-.4em}{\hskip-.4em}}%
\def\ints@@{\iflimtoken@                                              %1
 \def\ints@@@{\iflimits@\negintic@
   \mathop{\intic@\multintlimits@}\limits                             %2
  \else\multint@\nolimits\fi                                          %3
  \eat@}%                                                             %4
 \else                                                                %5
 \def\ints@@@{\iflimits@\negintic@
  \mathop{\intic@\multintlimits@}\limits\else
  \multint@\nolimits\fi}\fi\ints@@@}%
\def\intkern@{\mathchoice{\!\!\!}{\!\!}{\!\!}{\!\!}}%
\def\plaincdots@{\mathinner{\cdotp\cdotp\cdotp}}%
\def\intdots@{\mathchoice{\plaincdots@}%
 {{\cdotp}\mkern1.5mu{\cdotp}\mkern1.5mu{\cdotp}}%
 {{\cdotp}\mkern1mu{\cdotp}\mkern1mu{\cdotp}}%
 {{\cdotp}\mkern1mu{\cdotp}\mkern1mu{\cdotp}}}%
\def\RIfM@{\relax\protect\ifmmode}
\def\text{\RIfM@\expandafter\text@\else\expandafter\mbox\fi}
\let\nfss@text\text
\def\text@#1{\mathchoice
   {\textdef@\displaystyle\f@size{#1}}%
   {\textdef@\textstyle\tf@size{\firstchoice@false #1}}%
   {\textdef@\textstyle\sf@size{\firstchoice@false #1}}%
   {\textdef@\textstyle \ssf@size{\firstchoice@false #1}}%
   \glb@settings}
\def\textdef@#1#2#3{\hbox{{%
                    \everymath{#1}%
                    \let\f@size#2\selectfont
                    #3}}}
\newif\iffirstchoice@
\def\Let@{\relax\iffalse{\fi\let\\=\cr\iffalse}\fi}%
\def\vspace@{\def\vspace##1{\crcr\noalign{\vskip##1\relax}}}%
\def\multilimits@{\bgroup\vspace@\Let@
 \baselineskip\fontdimen10 \scriptfont\tw@
 \advance\baselineskip\fontdimen12 \scriptfont\tw@
 \lineskip\thr@@\fontdimen8 \scriptfont\thr@@
 \lineskiplimit\lineskip
 \vbox\bgroup\ialign\bgroup\hfil$\m@th\scriptstyle{##}$\hfil\crcr}%
\def\Sb{_\multilimits@}%
\def\endSb{\crcr\egroup\egroup\egroup}%
\def\Sp{^\multilimits@}%
\newdimen\ex@
\def\rightarrowfill@#1{$#1\m@th\mathord-\mkern-6mu\cleaders
 \hbox{$#1\mkern-2mu\mathord-\mkern-2mu$}\hfill
 \mkern-6mu\mathord\rightarrow$}%
\def\leftarrowfill@#1{$#1\m@th\mathord\leftarrow\mkern-6mu\cleaders
 \hbox{$#1\mkern-2mu\mathord-\mkern-2mu$}\hfill\mkern-6mu\mathord-$}%
\def\leftrightarrowfill@#1{$#1\m@th\mathord\leftarrow
\mkern-6mu\cleaders
 \hbox{$#1\mkern-2mu\mathord-\mkern-2mu$}\hfill
 \mkern-6mu\mathord\rightarrow$}%
\def\overrightarrow{\mathpalette\overrightarrow@}%
\def\overrightarrow@#1#2{\vbox{\ialign{##\crcr\rightarrowfill@#1\crcr
 \noalign{\kern-\ex@\nointerlineskip}$\m@th\hfil#1#2\hfil$\crcr}}}%
\def\overleftarrow{\mathpalette\overleftarrow@}%
\def\overleftarrow@#1#2{\vbox{\ialign{##\crcr\leftarrowfill@#1\crcr
 \noalign{\kern-\ex@\nointerlineskip}$\m@th\hfil#1#2\hfil$\crcr}}}%
\def\overleftrightarrow{\mathpalette\overleftrightarrow@}%
\def\overleftrightarrow@#1#2{\vbox{\ialign{##\crcr
   \leftrightarrowfill@#1\crcr
 \noalign{\kern-\ex@\nointerlineskip}$\m@th\hfil#1#2\hfil$\crcr}}}%
\def\underrightarrow{\mathpalette\underrightarrow@}%
\def\underrightarrow@#1#2{\vtop{\ialign{##\crcr$\m@th\hfil#1#2\hfil
  $\crcr\noalign{\nointerlineskip}\rightarrowfill@#1\crcr}}}%
\def\underleftarrow{\mathpalette\underleftarrow@}%
\def\underleftarrow@#1#2{\vtop{\ialign{##\crcr$\m@th\hfil#1#2\hfil
  $\crcr\noalign{\nointerlineskip}\leftarrowfill@#1\crcr}}}%
\def\underleftrightarrow{\mathpalette\underleftrightarrow@}%
\def\underleftrightarrow@#1#2{\vtop{\ialign{##\crcr$\m@th
  \hfil#1#2\hfil$\crcr
 \noalign{\nointerlineskip}\leftrightarrowfill@#1\crcr}}}%
\def\qopnamewl@#1{\mathop{\operator@font#1}\nlimits@}
\let\nlimits@\displaylimits
\def\setboxz@h{\setbox\z@\hbox}
\def\varlim@#1#2{\mathop{\vtop{\ialign{##\crcr
 \hfil$#1\m@th\operator@font lim$\hfil\crcr
 \noalign{\nointerlineskip}#2#1\crcr
 \noalign{\nointerlineskip\kern-\ex@}\crcr}}}}
 \def\rightarrowfill@#1{\m@th\setboxz@h{$#1-$}\ht\z@\z@
  $#1\copy\z@\mkern-6mu\cleaders
  \hbox{$#1\mkern-2mu\box\z@\mkern-2mu$}\hfill
  \mkern-6mu\mathord\rightarrow$}
\def\leftarrowfill@#1{\m@th\setboxz@h{$#1-$}\ht\z@\z@
  $#1\mathord\leftarrow\mkern-6mu\cleaders
  \hbox{$#1\mkern-2mu\copy\z@\mkern-2mu$}\hfill
  \mkern-6mu\box\z@$}
\def\projlim{\qopnamewl@{proj\,lim}}
\def\injlim{\qopnamewl@{inj\,lim}}
\def\varinjlim{\mathpalette\varlim@\rightarrowfill@}
\def\varprojlim{\mathpalette\varlim@\leftarrowfill@}
\def\varliminf{\mathpalette\varliminf@{}}
\def\varliminf@#1{\mathop{\underline{\vrule\@depth.2\ex@\@width\z@
   \hbox{$#1\m@th\operator@font lim$}}}}
\def\varlimsup{\mathpalette\varlimsup@{}}
\def\varlimsup@#1{\mathop{\overline
  {\hbox{$#1\m@th\operator@font lim$}}}}
\def\align{\@verbatim \frenchspacing\@vobeyspaces \@alignverbatim
You are using the "align" environment in a style in which it is not defined.}
\let\csname endalign*\endcsname =\endtrivlist
\def\alignat{\@verbatim \frenchspacing\@vobeyspaces \@alignatverbatim
You are using the "alignat" environment in a style in which it is not defined.}
\let\csname endalignat*\endcsname =\endtrivlist
\def\xalignat{\@verbatim \frenchspacing\@vobeyspaces \@xalignatverbatim
You are using the "xalignat" environment in a style in which it is not defined.}
\let\csname endxalignat*\endcsname =\endtrivlist
\def\gather{\@verbatim \frenchspacing\@vobeyspaces \@gatherverbatim
You are using the "gather" environment in a style in which it is not defined.}
\let\csname endgather*\endcsname =\endtrivlist
\def\multiline{\@verbatim \frenchspacing\@vobeyspaces \@multilineverbatim
You are using the "multiline" environment in a style in which it is not defined.}
\let\csname endmultiline*\endcsname =\endtrivlist
\def\arrax{\@verbatim \frenchspacing\@vobeyspaces \@arraxverbatim
You are using a type of "array" construct that is only allowed in AmS-LaTeX.}
\def\tabulax{\@verbatim \frenchspacing\@vobeyspaces \@tabulaxverbatim
You are using a type of "tabular" construct that is only allowed in AmS-LaTeX.}
\let\csname endarrax*\endcsname =\endtrivlist
\let\csname endtabulax*\endcsname =\endtrivlist
\def\@@eqncr{\let\@tempa\relax
    \ifcase\@eqcnt \def\@tempa{& & &}\or \def\@tempa{& &}%
      \else \def\@tempa{&}\fi
     \@tempa
     \if@eqnsw
        \iftag@
           \@taggnum
        \else
           \@eqnnum\stepcounter{equation}%
        \fi
     \fi
     \global\tag@false
     \global\@eqnswtrue
     \global\@eqcnt\z@\cr}
 \def\endequation{%
     \ifmmode\ifinner % FLEQN hack
      \iftag@
        \addtocounter{equation}{-1} % undo the increment made in the begin part
        $\hfil
           \displaywidth\linewidth\@taggnum\egroup \endtrivlist
        \global\tag@false
        \global\@ignoretrue   
      \else
        $\hfil
           \displaywidth\linewidth\@eqnnum\egroup \endtrivlist
        \global\tag@false
        \global\@ignoretrue 
      \fi
     \else   
      \iftag@
        \addtocounter{equation}{-1} % undo the increment made in the begin part
        \eqno \hbox{\@taggnum}
        \global\tag@false%
        $$\global\@ignoretrue
      \else
        \eqno \hbox{\@eqnnum}% $$ BRACE MATCHING HACK
        $$\global\@ignoretrue
      \fi
     \fi\fi
 } 
 \newif\iftag@ \tag@false
 \def\tag{\@ifnextchar*{\@tagstar}{\@tag}}
 \def\@tag#1{%
     \global\tag@true
     \global\def\@taggnum{(#1)}}
 \def\@tagstar*#1{%
     \global\tag@true
     \global\def\@taggnum{#1}%  
}
\begin{document}

\title[Neuronal Network]{An Interacting Neuronal Network with Inhibition: theoretical analysis and perfect simulation}
\author{Branda Goncalves }
\address{Laboratoire de Physique Th\'eorique et Mod\'elisation, CY Cergy Paris Universit\'e , CNRS UMR-8089, 2 avenue Adolphe-Chauvin, 95302 Cergy-Pontoise, France \\
 E-mail: branda.goncalves@outlook.fr}

\maketitle

\begin{abstract}
We study a purely inhibitory neural network model where neurons are represented by their state of inhibition. The study we present here is partially based on the work of Cottrell \cite{Cot} and Fricker et al. \cite{FRST}. The spiking rate of a neuron depends only on its state of inhibition. When a neuron spikes, its state is replaced by a random new state, independently of anything else and the inhibition state of the other neurons increase by a positive value.
Using the Perron-Frobenius theorem, we show the existence of a Lyapunov function for the process. Furthermore, we prove a local Doeblin condition which implies the existence of an invariant measure for the process.
Finally, we extend our model to the case where the neurons are indexed by $ \mathbb{Z}. $ We construct a perfect simulation algorithm to show the recurrence of the process under certain conditions. To do this, we rely on  the classical contour technique used in the study of contact processes, and assuming that the spiking rate lies on the interval $[ \beta_* , \beta^* ], $ we show that there is a critical threshold for the ratio $ \delta=  \frac{\beta_*}{\beta^* - \beta_*}$ over which the process is ergodic. \\
\textbf{Keywords}: spiking rate, interacting neurons, perfect simulation algorithm, classical contour technique.
\end{abstract}

\section{Introduction}

For the operation of a neural network, neurons excite and or inhibit each other.
Here, we study a model of a purely inhibitory neural network where neurons are represented by their inhibitory state. 
The study we present is partially based on the work of Cottrell \cite{Cot}. Her model consists of considering $ N $ interacting neurons described their state of inhibition. In her work, a neuron spikes when its state touches the value 0. When a neuron spikes, the state of inhibition of the other neurons increase by a non-negative deterministic constant $\theta.$ The spiking neuron immediately receives a random inhibition independent  of anything else. In Cottrell's work the state of inhibition is just the waiting time until the next spike.

In the present work we generalize Cottrell's model in several natural ways. Actually, in Cottrell's model, the next spiking time in the neural net is deterministic and we will lift this assumption.  A random spiking time is more realistic than deterministic  one since stochasticity is present all over in the brain functioning. Secondly, to allow formal general models we allow the state of inhibition to decrease at a general rate in between the successive spikes of the network while in Cottrell's work the drift of flow is equal to $-1.$

In the first part of this paper, we consider systems of $ N $ interacting neurons, in which any neuron can spike at any time. The spiking neuron takes a new random state of inhibition, and the others increase their inhibitory state by a deterministic quantity that we will call the inhibition weight, which depends on the distance between the spiking neuron and the "receiving" neuron, so that a neuron located far away of the spiking neuron is not impacted by the spike. The model thus presented obviously extends Cottrell \cite{Cot} and Fricker et al. \cite{FRST} in two ways: the spiking time is no more deterministic but it is random; the dynamic of the process is no more constant.

Firstly, we show the existence of a Lyapunov function that allows us to formulate a sufficient condition of non-evanescence of the process in the sense of Meyn and Tweedie \cite{MT}, i.e. a condition ensuring that the process does not escape at infinity.  To do so, we introduce a reproduction matrix $H $ and we suppose the spectral radius of $H $ is lower than $1.$ The eigenvector associated with the spectral radius of ${H}$ allows us to find a Lyapunov function for the process. 

Secondly, we study the recurrence of the process relying on Doeblin conditions which we establish for the embedded chain sampled at the jump times. We show the existence of an invariant probability measure for the process. We do this in the case the distribution of the new states has an absolutely continuous density and the jump rate is bounded.

In a second part, we consider the case where we have an infinite number of neurons indexed by $\mathbb{Z} $ (see Comets et al. \cite{CFF}, Galves and L\"{o}cherbach \cite{GL} and Galves et al.\cite{GLO}). In the work of Ferrari et al. \cite{FGGL}, considering an infinite system of interacting point processes with memory of variable length, the authors investigated the conditions for the existence of a phase transition using the classical contour technique, based on the classical work of Griffeath \cite{Gri} on a contact process. Following the idea of  Ferrari et al. \cite{FGGL} and Griffeath \cite{Gri}, we construct a perfect simulation algorithm that allows us to show the recurrence of the process.  Assuming that the spiking rate takes values in the interval $[ \beta_* , \beta^* ], $ we show that there is a critical threshold for the ratio $ \delta=  \frac{\beta_*}{\beta^* - \beta_*}$ over which the process is ergodic.

This paper is organized as follows. In section 2 we describe the model and study the law of the first jump time of the process. The Foster-Lyapunov and Doeblin conditions are discussed to find  non-evanescence criteria and to show the existence of the invariant probability measure of the process in section 3 which is our first main result. Finally, in Section 4, we present a perfect simulation algorithm and we simulate the law of the state of inhibition of a given neuron in its invariant regime.

\section{The model}

\subsection{ Description of the model }
In our paper, let us consider we have $N$ neurons that are related to each other. For all $ i \in \{1, \cdots, N \}, \  X_t^{i,N} $ describes the state of inhibition of neuron $i $ at time $t. $
When the neuron $i \in \{ 1, \cdots, N \}$ spikes,
\begin{enumerate}
\item[•] The current state of inhibition of neuron $i $ is replaced by a new value $Y^i$ independently of anything else with distribution $F^i. $

\item[•] The state of inhibition of any neuron $j \neq i $ is increased by a positive value $W_{i\to j}$ at time $t$.

\end{enumerate}
In between successive jumps of the system, each neuron $i $ follows the deterministic  dynamic  
\begin{equation*}
\overset{.}{x}_{t}^i=-\alpha_i \left( x_{t}^i\right) \text{, }x_{0}^i=x^i,
\end{equation*}
with $\alpha_i \left( x^i\right) $ continuous on $\left[ 0,\infty \right) $,
positive on $\left( 0,\infty \right) $ and non-negative on\textbf{\ }$\left[
0,\infty \right) $ and $x=(x^1, \cdots x^N).$  Let $\beta_i \left( x^i\right) $ be a continuous positive and decreasing rate function on $\left[ 0,\infty \right) $. 
We have taken $ \beta_i $ to be decreasing so that the larger $ x_t^i $ is, the lower its probability of spiking and the smaller $ x_t^i $ is, the higher its probability of spiking. 

We are thus led to consider the piecewise deterministic Markov process
(PDMP) $X_t^N = (X_t^{1,N}, \cdots, X_t^{N,N}) \in \mathbb{R}_+^N. $ For $i \in \{1, \cdots, N\}, $ the dynamic of $X_t^{i,N} $ is given by: 

\begin{multline} \label{eq:formule 1}
dX_{t}^{i,N} = -\alpha_i \left( X_{t-}^{i,N}\right) dt+ \int_{0}^{\infty } \int_{0}^{\infty }(y^i-X_{t-}^{i,N})\mathbf{1}_{\left\{ r\leq \beta_i  \left( X_{t-}^{i,N} \right) \right\} }M^i\left( dt,dr, dy^i\right) \\
				  +  \sum_{j\neq i } W_{j \to i } \int_{0}^{\infty }\int_{0}^{\infty }\mathbf{1}_{\left\{ r\leq \beta_j \left( X_{t-}^{j,N} \right) \right\} }M^j\left( dt,dr, dy^j\right),
\end{multline}
where $M^i $ is a random Poisson measure with intensity $ dt dr F^i ( dy ) $ and for all $i $, the $M^i $ are all independent.   This model extends that of Goncalves et al. \cite{GHL} in the multidimensional case. 

\begin{remark}
 For all $i \in \{ 1, \cdots N \} , $ $X_t^{i,N} $ can be interpreted as the inhibition state of the neuron $i $  at time $t $ and $W_{j \to i} $ as the inhibition weight of the neuron $j $ on the neuron $i.$
When $W_{i \to j} \leq0, $ we say that the neuron $i $ is excitatory for the neuron $j $ and when $W_{i \to j} \geq 0, $ we say that the neuron $i $ is inhibitory for the neuron $j.$ In our paper we are interested in the case where neuron $i $ is inhibitory for neuron $j $ i.e., $W_{i \to j} \geq 0. $
\end{remark}

\begin{remark}
 The formula (\ref{eq:formule 1}) is well-posed in the sense that there is non explosion of the process .   Since $ \beta_i ( X_s^{i,N} ) \leq \beta_i(0) $ for all $i$ we deduce that $ \int_0^t \beta_i ( X_s^{i,N }) ds < \infty $ whence the non explosion, that is, almost surely, the process has only a finite number of jumps within each finite time interval.
 
\end{remark}

The infinitesimal generator associated with this model is given by: 
\begin{multline}
 G^NV(x)= -\sum_{i=1}^{N} \alpha_i(x^i)\frac{\partial}{\partial{x^i}}V(x)+ \sum_{i=1}^{N} \beta_i(x^i)\int_0^{\infty}F^i(dy^i)[V(x+e^i y^i -e^i x^i + \sum_{j \neq i} e^j W_{i \to j})\\
- V(x)]
\end{multline}
where $V$ is a smooth function and $e^i$ is the $i-th $ unit vector. 

 In other words, at each jump of the process, a single neuron spikes. If it is neuron $i $ then its state is replaced by $Y^i$ and all other neurons receive the inhibition weight $W_{i\to j } \geq 0$ for any $ j\neq i.$

\subsection{First jump time}
Let $N_t^i $ be the counting process of successive jumps of neuron $i, $ that is, $$ N_t^i= \int_0^t \int_{\mathbb{R}_+} \int_{\mathbb{R}_+} \mathbf{1}_{ \{ r \leq \beta_i(X_s(x^i))\}}M^i(ds, dr, dy^i)  $$ and $ S_1^{i} $ the first jump time of neuron  $ i, $ so we have $$ S_1^{i} = \inf \{ t>0 | N_t^i = 1 \} \; \text{ and } \; \mathbb{P}( S_1^{i} >t)= e^{-\int_0^t \beta_i(x_s^i(x^i))ds }. $$

Let $ S_1 $ be the first jump time of the first neuron to jump, that is, $ S_1= \min_i S_1^{i}. $ For all  $ t>0, $ 
\begin{equation}\label{eq:premiersaut}
\mathbb{P}( S_1 >t ) = \mathbb{P}( \min_i S_1^{i} >t )= \prod_{i=1}^N \mathbb{P}( S_1^{i} >t) = \prod_{i=1}^N e^{-\int_0^t \beta_i(x_s^i(x^i))ds}. 
\end{equation}
Moreover, if $t < \min_{i} t_0(x^i) $ where   $$ t_0(x^i):= \int_0^{x^i}\frac{dy}{\alpha_i(y)}  $$  is the time for the neuron $ i $ hit  $ 0 $ starting from $ x^i, $ we can write by making a change of variables that is no longer valid after touching $0, $ that $$ \mathbb{P}( S_1 >t ) = \prod_{i=1}^N e^{ - [ \Gamma_i(x^i)-\Gamma_i(x_{t}^i(x^i)) ]}, $$

with $ \Gamma_i(x^i):= \int^{x^i} \gamma_i(y)dy $ and $ \gamma_i(x^i)= \beta_i(x^i)/ \alpha_i(x^i). $

\begin{ass}\label{ass1}
$ \Gamma_i(0)=-\infty $ for all $1 \le i \le N$.
\end{ass}

\begin{proposition}\label{eq:tempsfini}
\begin{enumerate}
\item[1.] Suppose  Assumption $ \ref{ass1} $  holds. Then  $ S_1 < \infty $ almost surely.

\item[2.] Suppose  Assumption $ \ref{ass1} $ does not hold. \\
 - If $ t_0(x^i) < \infty $ and $ \alpha_i(0)=0 $ then   $ S_1 < \infty $ almost surely if and only if $ \beta_i(0)>0 $ for all $i.$\\
-  If $ t_0(x^i) = \infty $ then $ \mathbb{P}(S_1 = \infty) >0 $ i.e. with a positive probability the first jump time is infinite. 
\end{enumerate}

\end{proposition}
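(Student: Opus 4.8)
The plan is to analyze $\mathbb{P}(S_1 > t)$ as $t \to \infty$ using the exact formula \eqref{eq:premiersaut}, namely $\mathbb{P}(S_1 > t) = \prod_{i=1}^N \exp\bigl(-\int_0^t \beta_i(x_s^i(x^i))\,ds\bigr)$, and to observe that $S_1 < \infty$ almost surely if and only if $\int_0^\infty \beta_i(x_s^i(x^i))\,ds = +\infty$ for at least one $i$; conversely $\mathbb{P}(S_1 = \infty) > 0$ precisely when $\int_0^\infty \beta_i(x_s^i(x^i))\,ds < \infty$ for every $i$. So the whole proposition reduces to deciding, for a single neuron following $\dot x^i_s = -\alpha_i(x^i_s)$ with $x^i_0 = x^i$, whether the integral $\int_0^\infty \beta_i(x^i_s)\,ds$ diverges. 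I would treat the two phases of the deterministic flow separately: the descent toward $0$ on $[0, t_0(x^i))$, and (if $0$ is reached in finite time) the behavior at and after hitting $0$.

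For the first phase, on $s < t_0(x^i)$ the change of variables $u = x^i_s(x^i)$, which has $du = -\alpha_i(u)\,ds$, is legitimate (as already noted in the text), and gives $\int_0^{\min(t,t_0(x^i))} \beta_i(x^i_s)\,ds = \int_{x^i_{\min(t,t_0)}}^{x^i} \gamma_i(u)\,du = \Gamma_i(x^i) - \Gamma_i\bigl(x^i_{\min(t,t_0)}(x^i)\bigr)$ with $\gamma_i = \beta_i/\alpha_i$. For part (1), under Assumption \ref{ass1} we have $\Gamma_i(0) = -\infty$. If $t_0(x^i) = \infty$ then $x^i_s \to 0$ as $s \to \infty$ (the flow decreases and cannot stop at a positive value since $\alpha_i > 0$ on $(0,\infty)$), so $\Gamma_i(x^i_s) \to \Gamma_i(0) = -\infty$ and the integral diverges. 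If $t_0(x^i) < \infty$, the neuron reaches $0$ in finite time; but already $\Gamma_i(x^i) - \Gamma_i(x^i_s) \to +\infty$ as $s \uparrow t_0(x^i)$ because $\Gamma_i(x^i_s) \to \Gamma_i(0) = -\infty$, so again $\int_0^\infty \beta_i(x^i_s)\,ds = +\infty$ and $S_1 < \infty$ a.s. (One small point to dispatch: $\Gamma_i$ is only defined up to an additive constant, but the differences $\Gamma_i(x^i) - \Gamma_i(y)$ are well-defined, and "$\Gamma_i(0) = -\infty$" is to be read as $\Gamma_i(x^i) - \Gamma_i(y) \to +\infty$ as $y \downarrow 0$, which is what is used.)

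For part (2), Assumption \ref{ass1} fails, i.e. $\Gamma_i(x^i) - \Gamma_i(y)$ stays bounded as $y \downarrow 0$. If $t_0(x^i) = \infty$ for all $i$, then on $[0,\infty)$ the change of variables is valid for all $s$, and $\int_0^\infty \beta_i(x^i_s)\,ds = \Gamma_i(x^i) - \lim_{s\to\infty}\Gamma_i(x^i_s) = \Gamma_i(x^i) - \Gamma_i(0^+) < \infty$ for every $i$; hence $\mathbb{P}(S_1 = \infty) = \prod_i \exp(-[\Gamma_i(x^i)-\Gamma_i(0^+)]) > 0$. If instead $t_0(x^i) < \infty$ for some $i$ and $\alpha_i(0) = 0$, the neuron hits $0$ in finite time and then stays at $0$ (since $0$ is a fixed point of the flow when $\alpha_i(0)=0$); so for $s \ge t_0(x^i)$ we have $x^i_s = 0$ and the tail contribution is $\int_{t_0(x^i)}^\infty \beta_i(0)\,ds$, which is $+\infty$ iff $\beta_i(0) > 0$ and $0$ iff $\beta_i(0) = 0$ (using $\beta_i$ continuous and decreasing, so $\beta_i(0) \ge 0$ is its maximum). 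Meanwhile the finite-time descent contributes the finite amount $\Gamma_i(x^i) - \Gamma_i(0^+)$. Thus $S_1 < \infty$ a.s. iff $\beta_i(0) > 0$, and the stated "for all $i$" in the proposition should be read as the condition under which each such neuron forces a finite jump; combining across $i$ via \eqref{eq:premiersaut} gives the claim. The main obstacle, and the only place requiring care, is the bookkeeping around the moment $0$ is reached: one must check that the change of variables really does capture the full integral up to $t_0$, that the limit $\Gamma_i(x^i_s) \to \Gamma_i(0^+)$ exists (monotonicity of $\Gamma_i$ along the decreasing flow handles this), and that the post-$0$ behavior is governed entirely by whether $\alpha_i(0) = 0$ (flow frozen at $0$) versus $\alpha_i(0) > 0$ (which cannot happen when $t_0(x^i) < \infty$ is compatible with staying — actually if $\alpha_i(0) > 0$ the flow would push below $0$, outside the state space, so that regime is vacuous here and is correctly excluded by the hypothesis $\alpha_i(0) = 0$).
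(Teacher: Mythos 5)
Your proposal is correct and follows essentially the same route as the paper: it analyzes $\mathbb{P}(S_1>t)$ via formula (\ref{eq:premiersaut}), uses the change of variables giving $\Gamma_i(x^i)-\Gamma_i(x^i_t(x^i))$, splits on $t_0(x^i)$ finite or infinite, and treats the case $\alpha_i(0)=0$ by noting the flow freezes at $0$ so finiteness of $S_1$ is governed by $\beta_i(0)>0$. Your write-up is merely more explicit than the paper's (notably on the post-hitting-time contribution and the reading of the ``for all $i$'' condition), but no genuinely different idea is involved.
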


\begin{proof}
Let  $ N $ be fixed and suppose Assumption $ \ref{ass1} $ holds.

 If $ t_0(x^i) =\infty $ and letting $ t $ tend to $ \infty $ in  (\ref{eq:premiersaut}) we have $$ \mathbb{P}(S_1 = \infty) = \prod_{i = 1}^N e^{-[\Gamma_i(x^i)-\Gamma_i(x_{\infty}^i(x^i))]} = \prod_{i = 1}^N e^{-[\Gamma_i(x^i)-\Gamma_i(0)]},  $$ since   $x_{\infty}^i(x^i) = 0. $ Then $  \mathbb{P}(S_1 = \infty) = 0 $ that is $ S_1 < \infty $ almost surely.

If $ t_0(x^i) < \infty $ and letting $ t \uparrow \min_i t_0(x^i) $ in (\ref{eq:premiersaut}), we obtain  $$ \mathbb{P}(S_1 \geq  \min_i t_0(x^i)) = \lim_{t\uparrow \min_i t_0(x^i)}  \mathbb{P}(S_1 > t )= \prod_{i = 1}^N  e^{-[\Gamma_i(x^i)-\Gamma_i(0)]}= 0 $$ implying that $ S_1 < \infty $ almost surely.

Suppose  Assumption $ \ref{ass1} $ does not hold. If  $\alpha_i (0) = 0 $ (this means that the flow of the process brings us to $0$ at most ) the time for the neuron $ i $ hit $ 0 $ starting from $ x^i $ is finite i.e  $ t_0(x^i) < \infty $ then it is obvious (by definition of $t_0(x^i) $) to see that it is enough that $ \beta_i(0) > 0 $ to have   $ S_1 < \infty $ almost surely. 

If  Assumption $ \ref{ass1} $ does not hold and $ t_0(x^i) = \infty $ then by making $t \to \infty $ in (\ref{eq:premiersaut}) we have $\mathbb{P}(S_1 = \infty) > 0 $ that is $ S_1 = \infty $ with a positive probability.
 
\end{proof}
We finish this section with a simulation of the process starting from some fixed initial configuration $ (x_0^1, \cdots, x_0^N). $
For this, we assume that for all $i $ the jump rate $\beta_i(x^i) $ is bounded and lower bounded, that is, $\beta_i(x^i) \in [\beta_*, \beta^*] $ for all $x^i>0, $ where $0< \beta_* < \beta^* < \infty.$ 

The following variables will be used to write our simulation algorithm.
\begin{enumerate}
\item[•] $ T $ is the time vector
\item[•] $ L $ is the label associated with $T.$ It will be $ \{sure\} $ or $\{uncertain\}$
\item[•] $ P =( P^1, \cdots, P^N) $ is the vector of states of the $ N $ neurons at a fixed instant 
\item[•] $ I $ is the vector which represents the number of the neuron which spikes.
\end{enumerate}

\textbf{Algorithm }
\begin{enumerate}
\item We set $T_1 \sim \exp(\beta^* N)$ \\
- $L_1 = \{uncertain\} $  with probability $\frac{\beta^* -\beta_*}{\beta^*} $  \\
-  $L_1 = \{ sure\} $ with probability $\frac{\beta_*}{\beta^*} $ 
\item We initialize the vector $P $ with the values $(x_0^1, \cdots, x_0^N )$ 
\item We choose $I_1 = k $ with probability $\frac{1}{N}$\\
- If $L_1 = \{ sure\}, $ \begin{equation*}
(a)
\begin{cases}
P^k \sim F^k, \\ P^i \gets x_{T_1}^i(P^i)+ W_{k \to i}
\end{cases}
\end{equation*}
-If $L_1 = \{uncertain\} $ we accept the jump with probability $$p= \frac{\beta_k(x_{T_1}^k(P^k))-\beta_*}{\beta^*-\beta_*}, $$ and we apply $(a).$\\
-Else $P^j \gets x_{T_1}^j(P^j), \;\; \forall j \in \{1, \cdots, N \}. $ 
\item We update the vector $P $ and start the procedure again from $(1).$
\end{enumerate}

We stop the procedure after a fixed finite number $ n $  of iterations.

We plot in the following figure a typical trajectory of $ X_t^{i,N} $ with $N=2 $ neurons.

\begin{figure}[h]
    \begin{minipage}[c]{.46\linewidth}
        \centering
        \includegraphics[scale=0.4, height=6cm, width=7.5cm]{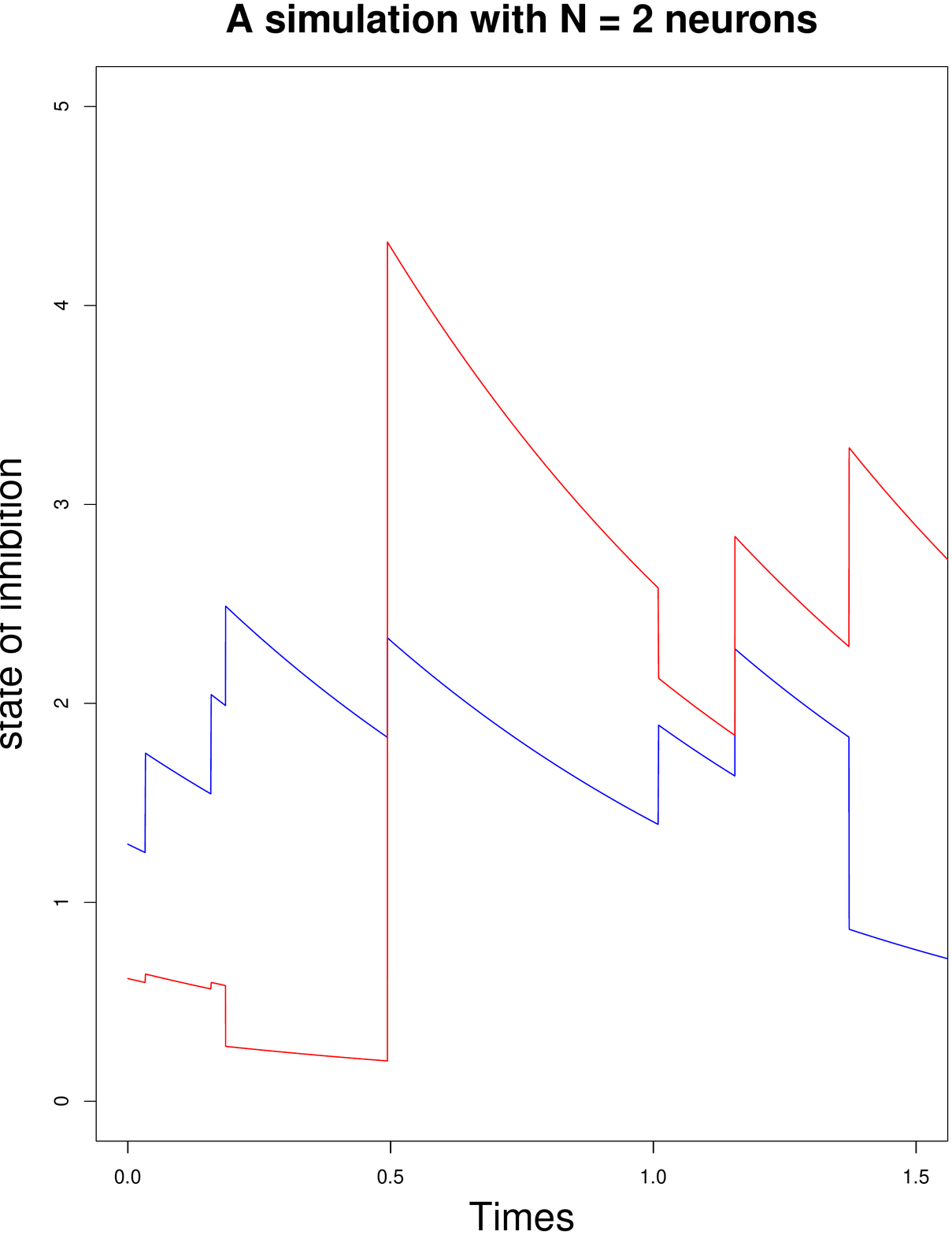}
        \caption{}
    \end{minipage}
    \hfill%
    \begin{minipage}[c]{.46\linewidth}
        \centering
        \includegraphics[scale=0.4, height=6cm, width=7.5cm]{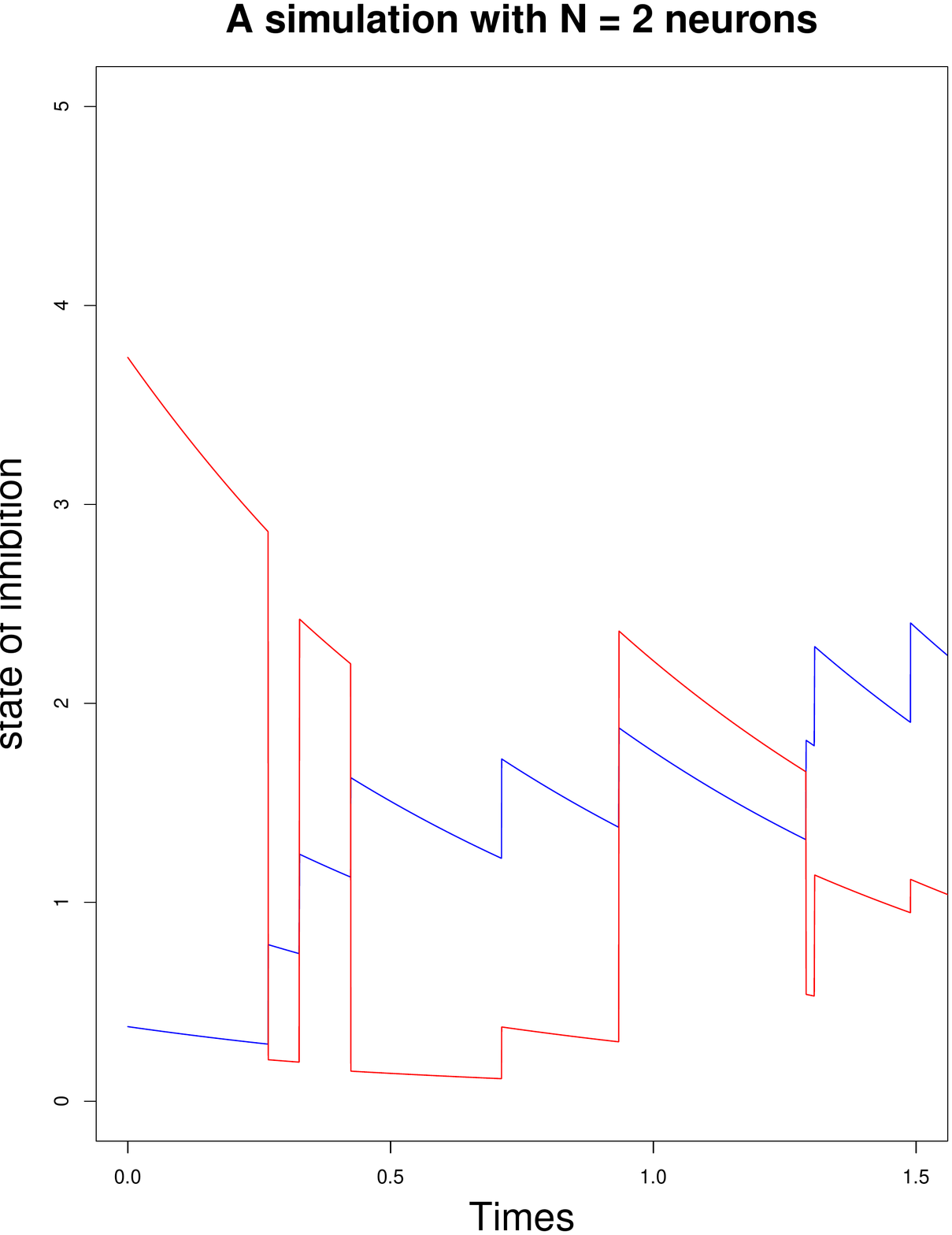}
        \caption{}
    \end{minipage}
\end{figure}

In both figures $ N=2 $ neurons and $n =50 $ iterations. $ \alpha (x)= x, \beta(x) = 3+ \mathbf{1}(x \leq 2).$  In the figure on the left, $W_{j \to i} = i/N $ for all $j\neq i $ and in the figure of right, $W_{j \to i } = 1/2 $ for all $j\neq i $.

\section{Foster-Lyapunov and Doeblin conditions }

In this section, we want to find conditions of non-evanescence of the process and show the existence of an invariant probability measure of the process.

\subsection{Foster-Lyapunov condition }

We suppose that $ \gamma_i := \beta_i/\alpha_i $ is bounded and we define $ W $ the matrix of inhibition weight  by $ W_{ij} := W_{j \to i} , \ i\neq j \text{ and }  W_{ii} = 0.$ 

It is further assumed that the matrix $W $ is irreducible in the sense that there exists an integer $p>0 $ such that $W^p >0. $  We introduce the reproduction matrix
$$ 
 H_{ij } = W_{j \to i }\| \gamma_i\|_{\infty} ,\  i \neq j, \ H_{ii} = \| \gamma_i\|_{\infty} \int_0^\infty y^i F^i(dy^i) $$ 
which is also irreducible.

Suppose that $$ \rho (H)  < 1 $$
where $\rho (H) $ is the largest eigenvalue of $H $ that is the spectral radius of $H. $ 
Then (Perron Frobenius) there exists a left eigenvector $ \kappa $ associated to this eigenvalue $ \rho, $ that is, for all $i, $ 
$$ \sum_j  \kappa_j H_{j i } = \rho \kappa_i .$$ 
On the other hand, put  $m_i = \kappa_i\| \gamma_i\|_{\infty} $ and et $ V(x) = \sum m_i x^i.$

Finally, let $ V: \mathbb{R}_+^N \to \mathbb{R} $ such that $ V(x)=\sum_{i=1}^N m_i x^i $ and we recall that the infinitesimal generator is given by: 
\begin{multline*}
G^NV(x)= -\sum_{i=1}^{N} \alpha_i(x^i)\frac{\partial}{\partial{x^i}}V(x)+ \sum_{i=1}^{N} \beta_i(x^i)\int_0^{\infty}F^i(dy^i)[V(x+e^i y^i -e^i x^i + \sum_{j \neq i} e^j W_{i \to j})\\
- V(x)]
\end{multline*}
So by replacing $ V $ by its expression in the infinitesimal generator $ G^NV(x) $ we have:
\begin{multline*}
 G^NV(x) = -\sum_{i=1}^{N} \alpha_i(x^i)m_i+ \sum_{i=1}^{N} \beta_i(x^i)\int_0^{\infty}dF^i(y^i)[\sum_{j=1, j \neq i}^N (W_{i \to j}+x^j ) m_j + y^im_i - \sum_{j=1}^N x^j m_j]  \\
 = -\sum_{i=1}^{N} \alpha_i(x^i)m_i+ \sum_{i=1}^{N} \beta_i(x^i)(m_i \int_0^{\infty} y^i F^i (dy^i) + \sum_{j\neq i } W_{i \to j } m_j )- \sum_{i=1}^{N}  \beta_i(x^i)x^i m_i
\end{multline*}

Then, since $ - \beta_i (x^i ) x^i \le 0,$  

\begin{multline*}
G^N V ( x) \le  - \sum_i \alpha_i ( x^i ) m_i + \sum_i \beta_i (x^i ) ( m_i \int_0^{\infty} y^i F^i (dy^i) + \sum_{j\neq i } W_{i \to j } m_j ) \\
=  - \sum_i \alpha_i ( x^i ) \left( m_i - \gamma_i ( x^i )  \left[ m_i c_i  H_{ii} + \sum_{ j \neq i }c_j H_{ji } m_j  \right] \right) \\
=  - \sum_i \alpha_i ( x^i ) \left( m_i - \gamma_i ( x^i )  \left[ \kappa_i  H_{ii} + \sum_{ j \neq i } \kappa_j H_{ji }  \right] \right) \\
=  - \sum_i \alpha_i ( x^i ) \left( \| \gamma_i\|_\infty \kappa_i - \gamma_i ( x^i )  \rho \kappa_i \right) \\
=   - \sum_i \alpha_i ( x^i )  \| \gamma_i\|_\infty \kappa_i  \left( 1 - \frac{ \gamma_i ( x^i ) }{ \| \gamma_i\|_\infty} \rho \right) . 
\end{multline*}

\begin{definition}
We call the process non evanescent if there exists a compact $K$ such that for all $x, $ $\mathbb{P}_x-$ almost surely, $\limsup_t 1_{K} (X_t) =1.$ 
\end{definition}

\begin{proposition}
If $\rho <1, $ then the process is non-evanescent. 
\end{proposition}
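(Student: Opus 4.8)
\emph{Proof proposal.} The plan is to read off non-evanescence directly from the drift inequality already established, via the Foster--Lyapunov criterion of Meyn and Tweedie \cite{MT}. First I would pin down the sign of the right-hand side of the last displayed bound for $G^NV$. Since each $\beta_i$ is positive we have $\|\gamma_i\|_\infty>0$, and since $H$ is irreducible with nonnegative entries the Perron--Frobenius left eigenvector $\kappa$ has strictly positive coordinates; hence $m_i=\kappa_i\|\gamma_i\|_\infty>0$ for every $i$. Moreover $0\le\gamma_i(x^i)\le\|\gamma_i\|_\infty$ and $\rho<1$ give $1-\frac{\gamma_i(x^i)}{\|\gamma_i\|_\infty}\rho\ge 1-\rho>0$, while $\alpha_i(x^i)\ge 0$. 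Therefore
$$ G^NV(x)\ \le\ -(1-\rho)\sum_{i=1}^N \alpha_i(x^i)\,\|\gamma_i\|_\infty\,\kappa_i\ \le\ 0\qquad\text{for all }x\in\mathbb{R}_+^N. $$
Second, I would observe that $V(x)=\sum_i m_i x^i$ is norm-like on $\mathbb{R}_+^N$: it is continuous, nonnegative, all $m_i>0$, so $V(x)\to\infty$ as $|x|\to\infty$ within $\mathbb{R}_+^N$, and each sublevel set $\{V\le c\}$ is compact.

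The core step is then a supermartingale argument. By Dynkin's formula for the PDMP, $M_t:=V(X_t^N)-V(X_0^N)-\int_0^t G^NV(X_s^N)\,ds$ is a local martingale. Because $G^NV\le 0$, the map $t\mapsto\int_0^t G^NV(X_s^N)\,ds$ is nonincreasing, so $V(X_t^N)=V(X_0^N)+\int_0^tG^NV(X_s^N)\,ds+M_t$ is a nonnegative local supermartingale, hence a genuine supermartingale. By the supermartingale convergence theorem it converges $\mathbb{P}_x$-a.s.\ to a finite limit; in particular $\sup_{t\ge0}V(X_t^N)<\infty$ $\mathbb{P}_x$-a.s. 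Since $\{V\le c\}$ is compact for every $c$, the trajectory $(X_t^N)_{t\ge0}$ a.s.\ remains in a compact subset of $\mathbb{R}_+^N$ and therefore cannot converge to infinity, which is precisely non-evanescence in the sense of \cite{MT}. Equivalently, one may simply quote the Meyn--Tweedie non-evanescence criterion with norm-like function $V$ and drift condition $G^NV\le0$.

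The only genuinely delicate point is the justification that the linear function $V$ lies in the domain of the extended generator of the PDMP and that $M_t$ is (locally) a martingale with the stated supermartingale property. This rests on the standing assumptions: the total spiking rate is bounded by $\sum_i\beta_i(0)<\infty$ (so finitely many jumps on compact time intervals, as noted after \eqref{eq:formule 1}), the mean new states $\int_0^\infty y^iF^i(dy^i)$ are finite, and each $\alpha_i$ is continuous hence locally bounded along the flow between jumps, so that $V(X_t^N)$ has integrable variation. Everything else---the sign analysis and the norm-like property---is immediate from the Perron--Frobenius setup. (Note that non-evanescence as stated follows purely from $G^NV\le0$ and does not require Assumption \ref{ass1}; that assumption is only relevant if one additionally wants the process to jump at all, cf.\ Proposition \ref{eq:tempsfini}.)
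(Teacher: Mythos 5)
Your proposal is correct and takes essentially the same route as the paper: it rests on the same Lyapunov function $V(x)=\sum_i m_i x^i$, the same drift bound $G^NV\le 0$ obtained from the Perron--Frobenius eigenvector, and the Meyn--Tweedie non-evanescence criterion (CD0) of \cite{MT}, which your supermartingale/Dynkin argument simply re-derives rather than replaces. The extra details you supply (strict positivity of $\kappa_i$, the norm-like property of $V$, and the domain/localization issues) are sensible elaborations of what the paper leaves to the citation, not a different method.
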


\begin{proof}
$V(x) $ defined in $G^NV(x) $ above is a norm-like function and $G^NV(x) <0. $
The theorem (CD0) of Meyn and Tweedie \cite{MT} implies the result.
\end{proof}

%Pour la these??
% \begin{remark}
%We can refine the above conditions as follows. Let $K$  be a compact set.
%Then we replace the above definition of $ H_{ii} $ by 
%$$ H_{ii} = \| \gamma_i\|_\infty \sup \{ ( \int y^i F^i ( dy^i ) - x^i )_+ : x \in K^c, \beta_i ( x^i ) > 0 \} $$
%and suppose that $ \rho (H) < 1.$  As a consequence, we obtain, with the same calculus, that on $ K^c,$ $$ G^N V (x) < 0 . $$ 
%\end{remark}

\begin{example}{ \bf (Mean-field interaction)} Suppose we have $ N $ neurons.
We suppose also that $ \gamma_i = \gamma, $  where $\gamma $ is bounded and  $  F^i=F, \, W_{j \to i}= \theta $ for all $i. $ In this case the reproduction matrix is $$ H_{ij}= \theta \| \gamma \|_{\infty}, \, i \neq j, \, H_{ii} = \| \gamma \|_{\infty} \mathbb{E}(Y). $$  
Suppose $\rho(H) $ is the spectral radius of $H. $ Then, $ \rho(H) = \| \gamma \|_{\infty}(\mathbb{E}(Y) + (N-1) \theta ) $ and its associated eigenvector is $\kappa = ( 1, \cdots, 1). $ The condition $\rho(H) <1 $ is therefore equivalent to $ \| \gamma \|_{\infty} (\mathbb{E}(Y) + (N-1) \theta ) <1.$ For $\theta = \varepsilon/N $ with $\varepsilon >0 $  the condition $\rho(H) <1 $ becomes  $ \| \gamma \|_{\infty} (\mathbb{E}(Y) + \varepsilon) < 1. $
\end{example}

\begin{example} {\bf (Torus)}
Suppose we have $  N \geq 3 $  neurons such that each neuron interacts with its two nearest neighbors (its left and right neighbors). Neuron  $ 1  $ interacts with neuron  $ 2 $ and neuron  $ N $ . Neuron $ N $ interacts with neuron $ N-1 $ and neuron $ 1, $ so we have a torus.

We suppose for all $i, $ we have $ \gamma_i = \gamma, $ bounded and  $ \, F^i=F. $ $ W_{j \to i}= \theta $ for all $ j \in \{i+1, i-1\} $ and $ W_{j \to i}= 0 $ if $ j \neq \{i+1, i-1\}. $  In this case the reproduction matrix is 
\begin{equation*}
H_{ij} = 
\begin{cases}
\theta \| \gamma \|_{\infty} & \text{ , if }  i \neq j, \, j \in \{i+1, i-1\} \\
0  & \text{ , if }   i \neq j, \, j \neq \{i+1, i-1\} \\
\| \gamma \|_{\infty} \mathbb{E}(Y)  &\text{ , if } i=j. 
\end{cases}
\end{equation*}
If $\rho (H) $ is the spectral radius of $ H $ then $ \rho(H) = \| \gamma \|_{\infty} ( \mathbb{E}(Y) + 2 \theta ) $ and its associated eigenvector is $\kappa = ( 1, \cdots, 1). $  The condition $\rho(H) <1 $ is equivalent to $ \| \gamma \|_{\infty} (\mathbb{E}(Y) + 2 \theta  )<1.$ 
\end{example}

\subsection{Doeblin condition }

Let $ S_0 < S_1 < \cdots < S_n<\cdots $ be the instants of successive jumps of the $ N $ neurons. It is obvious that the embedded chain  $ Z_n := X_{S_n} $ is a Markov chain.  
Let $ I_n $ be the index of the neuron which jumps at time $ S_n. $ 

\begin{proposition} Suppose that the assumptions of proposition \ref{eq:tempsfini} hold. Then,
 $ (Z_n , I_n) $ is a Markov chain and its  transition $Q(x, dy)$  is given by:
\begin{multline}
\mathbb{P}(Z_n \in dy, I_n=j | Z_{n-1}=x, I_{n-1}=i) = \int_0^{\infty}ds e^{-\int_0^s dl \sum_{i=1}^N \beta_i(x_l^i(x^i)) } \beta_j(x_s^j(x^j)) \\
 \times  \int F^j(du) \delta_{(x_s^1(x^1)+W_{j \to 1}, \cdots, x_s^{j-1}(x^{j-1})+W_{j \to j-1}, u,\; x_s^{j+1}(x^{j+1})+W_{j \to j+1}  , \cdots, x_s^N(x^N)+W_{j \to N} )} (dy). 
  \end{multline}
\end{proposition}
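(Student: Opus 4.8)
The plan is to proceed in two stages: first to argue, using the strong Markov property of the piecewise deterministic Markov process $X^N$, that $(Z_n,I_n)_{n\ge 0}$ is a Markov chain whose transition does not in fact depend on $I_{n-1}$; and then to compute explicitly the law of the first jump of the process started from an arbitrary configuration $x$.

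For the Markov property, I would use that, by the non-explosion property established after (\ref{eq:formule 1}), $X^N$ is a well-defined strong Markov process and that each $S_n$ is a stopping time for its natural filtration $(\mathcal F_t)_{t\ge 0}$. Conditionally on $\mathcal F_{S_{n-1}}$, the shifted process $(X^N_{S_{n-1}+t})_{t\ge 0}$ is then a copy of the PDMP started from $X^N_{S_{n-1}} = Z_{n-1}$, and the pair $(Z_n,I_n) = (X^N_{S_n},I_n)$ is a measurable functional of this shifted process alone. This yields at once that $(Z_n,I_n)$ is a Markov chain and that its kernel $Q(x,\cdot)$ depends on the past only through $x$ --- in particular not through the label $i = I_{n-1}$ of the previous spike, which is why $i$ is absent from the right-hand side.

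It then remains to identify $Q(x,dy) = \mathbb{P}_x(Z_1 \in dy,\ I_1 = j)$ for the PDMP started at $X^N_0 = x$. Before $S_1$ each coordinate follows the deterministic flow, $X^{i,N}_l = x^i_l(x^i)$, so on $\{l < S_1\}$ the counting process $N^i$ reduces to a point process with intensity $\beta_i(x^i_l(x^i))\,dl$, and the $N^i$, $1 \le i \le N$, are independent because the driving Poisson measures $M^i$ are. Writing $S_1^i$ for the first point of $N^i$, one has $\mathbb{P}_x(S_1^i > s) = \exp(-\int_0^s \beta_i(x^i_l(x^i))\,dl)$, as already recorded in (\ref{eq:premiersaut}), while $S_1 = \min_i S_1^i$ and $I_1$ is the almost surely unique index attaining this minimum. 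By independence of these ``clocks'', the event $\{I_1 = j,\ S_1 \in ds\}$ carries sub-probability
\[
\Big( \prod_{i \ne j} e^{-\int_0^s \beta_i(x^i_l(x^i))\,dl} \Big)\, \beta_j(x^j_s(x^j))\, e^{-\int_0^s \beta_j(x^j_l(x^j))\,dl}\, ds = \beta_j(x^j_s(x^j))\, e^{-\int_0^s \sum_{i=1}^N \beta_i(x^i_l(x^i))\,dl}\, ds .
\]

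Finally, conditionally on $\{I_1 = j,\ S_1 = s\}$, the dynamics prescribe the post-jump configuration: neuron $j$ is reset to an independent $u \sim F^j$, while each coordinate $k \ne j$ is shifted deterministically to $x^k_s(x^k) + W_{j \to k}$; hence $Z_1$ is the point $(x^1_s(x^1) + W_{j\to 1}, \dots, u, \dots, x^N_s(x^N) + W_{j\to N})$ with $u$ in the $j$-th slot, which is precisely the Dirac mass appearing in the statement. Multiplying the displayed sub-probability by $F^j(du)$ and integrating over $s \in (0,\infty)$ and over $u$ then gives the announced formula for $Q(x,dy)$; that $Q(x,\cdot)$ is a genuine probability kernel follows from Proposition~\ref{eq:tempsfini}, which guarantees $S_1 < \infty$ $\mathbb{P}_x$-almost surely. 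The one delicate point is the ``competing clocks'' step --- verifying that on $[0,S_1)$ the processes $N^i$ genuinely are independent inhomogeneous Poisson processes with the stated intensities, so that taking the minimum and the minimizing index yields the product form above; once this is justified from the independence of the $M^i$ and the determinism of the flow before $S_1$, the remainder is a routine integration.
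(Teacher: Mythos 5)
Your proof is correct. The paper gives no proof of this proposition at all---the Markov property of the embedded chain is declared obvious and the kernel is simply written down---and your argument supplies exactly the standard justification left implicit: the strong Markov property at the stopping times $S_{n-1}$ (which also explains why the kernel does not depend on $I_{n-1}$), the reduction, before the first jump, of the counting processes $N^i$ to independent inhomogeneous Poisson processes with intensities $\beta_i\left(x^i_l(x^i)\right)dl$ along the deterministic flow (the delicate point you rightly flag), the resulting joint law $\beta_j\left(x^j_s(x^j)\right)e^{-\int_0^s \sum_i \beta_i\left(x^i_l(x^i)\right)dl}\,ds$ of $(S_1,I_1)$, the prescribed post-jump configuration giving the Dirac mass with $u\sim F^j$ in the $j$-th slot, and the almost sure finiteness of $S_1$ ensuring that $Q(x,\cdot)$ has total mass one.
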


 \begin{theorem}\label{theo:petiteset}
Suppose for all $1 \le i \le N, \, \alpha_i \in \mathcal{C}^1 $ and there exists a compact set $K \subset ]0, \infty[^N $ such that for all $ x \in K, $ for all $ 1 \le i \le N, $ $  \beta_i(x^i+\sum_{j=1}^{i-1} W_{j \to i}) >0  .$ Moreover we suppose that $F^i(dy) $ is absolutely continuous and $ \| \beta_i\|_\infty < \infty $ for all $ i.$ 
Then there exist $ d \in (0, 1 ) $ and 
a probability measure $ \nu $ on $ ( \R_+^N , {\mathcal{B} }( \R_+^N) ) ,$ such that 
\begin{equation}
 Q ^N(x, dy ) \geq d 1_K (x) \nu (dy) 
\end{equation}
where $Q $ is the transition operator of embedded chain $Z_n = X_{S_n} $ and $Q ^N $ is its $N-$th  iterate.
 \end{theorem}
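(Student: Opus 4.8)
The strategy is to exhibit an explicit lower bound for $Q^N(x,dy)$ by tracking one favorable scenario: over $N$ consecutive jumps, each of the $N$ neurons spikes exactly once, in the order $1,2,\dots,N$, and each spiking time is constrained to a fixed compact sub-interval of $(0,\infty)$. Concretely, I would first fix $K\subset (0,\infty)^N$ as in the hypothesis and choose constants $0<\underline s<\overline s<\infty$ together with a compact $\tilde K$ large enough that, starting from any $x\in K$, after any of the partial dynamics-plus-inhibition updates the state stays in $\tilde K$; this uses that the flows $x^i\mapsto x_s^i(x^i)$ are continuous (indeed $C^1$, since $\alpha_i\in\mathcal C^1$) and that the inhibition weights are bounded, so the image of $K$ under finitely many such maps for $s\in[\underline s,\overline s]$ is contained in a compact set. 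On $\tilde K$ the rates $\beta_i$ are bounded above by $\|\beta_i\|_\infty<\infty$ and, by the continuity/positivity hypothesis together with monotonicity of $\beta_i$, bounded below by some $\beta_{\min}>0$; also each $t_0(x^i)$ is bounded below on $\tilde K$, so we may shrink $[\underline s,\overline s]$ so that no neuron hits $0$ during a waiting time in that window.

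Next I would iterate the one-step transition formula from the preceding Proposition $N$ times. Writing $Q^N(x,dy)=\int\!\cdots\!\int Q(x,dz_1)Q(z_1,dz_2)\cdots Q(z_{N-1},dy)$, I restrict the first integral to the event $\{I_1=1,\ S_1\in[\underline s,\overline s]\}$, the second to $\{I_2=2,\ S_2-S_1\in[\underline s,\overline s]\}$, and so on. On each step the integrand contains the factor $e^{-\int_0^s\sum_i\beta_i}$, which is bounded below by $e^{-\overline s\sum_i\|\beta_i\|_\infty}$, the rate factor $\beta_{I_k}(\cdot)\ge\beta_{\min}$, and the density of $F^{I_k}$ in the freshly-reset coordinate. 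After all $N$ steps every coordinate of $y$ has been reset exactly once by some $Y^i\sim F^i$, while the other coordinates have only been shifted by deterministic (flow + inhibition) amounts; hence the resulting measure on $dy$ is absolutely continuous, and a change of variables in the $N$ reset coordinates $u_1,\dots,u_N$ (plus the $N$ time increments) turns the iterated integral into $d\cdot\nu(dy)$ with $\nu$ a genuine probability measure supported on $\tilde K'\subset(0,\infty)^N$ and $d>0$ an explicit constant depending only on $N,\underline s,\overline s,\beta_{\min},\|\beta_i\|_\infty$ and lower bounds on the densities $dF^i/dy$ over the relevant compact ranges. Normalizing gives $d\in(0,1)$.

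The main obstacle is the change-of-variables / absolute-continuity bookkeeping in the last step: one must check that the composition of the deterministic flow maps $x^i\mapsto x_s^i(x^i)$ (which are diffeomorphisms onto their image for $s$ in the chosen window, with Jacobian bounded away from $0$ and $\infty$ because $\alpha_i\in\mathcal C^1$ and is positive on the compact set) together with the linear inhibition shifts is a bi-Lipschitz map whose Jacobian is controlled, so that pushing the product of the $F^i$-densities forward yields a density bounded below on a compact set. A secondary point requiring care is that the positivity hypothesis is stated as $\beta_i\bigl(x^i+\sum_{j<i}W_{j\to i}\bigr)>0$ on $K$ — exactly the configuration reached after neurons $1,\dots,i-1$ have each spiked once and added their weight to neuron $i$ — which is why the spiking order $1,2,\dots,N$ is the right one to use and why $N$ iterations (rather than one) are needed; I would make this matching explicit so the lower bound $\beta_{\min}>0$ is legitimately available at every step. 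Everything else is a routine estimate, and the conclusion $Q^N(x,dy)\ge d\,\mathbf 1_K(x)\,\nu(dy)$ follows.
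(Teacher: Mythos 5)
Your proposal is correct and follows essentially the same route as the paper: restrict to the event that the neurons spike once each in the order $1,\dots,N$ with controlled inter-jump times, lower-bound the rate and exponential factors using the hypothesis $\beta_i\bigl(x^i+\sum_{j<i}W_{j\to i}\bigr)>0$ on $K$ together with the monotonicity of $\beta_i$, and change variables through the composed flow-plus-inhibition maps (the paper's $\Psi_{s_N}$, shown there to be a local diffeomorphism with controlled Jacobian) to produce the minorizing measure $\nu$. The only cosmetic differences are your time window $[\underline s,\overline s]$ in place of the paper's $[0,\varepsilon]$ and your explicit insistence on a lower bound for the densities of $F^i$ on the relevant compact ranges, a point the paper leaves implicit.
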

 
To prove the above result we fix any deterministic sequence $s_1 <\cdots <s_N. $ In the sequel we shall work on the event $S_1=s_1, \cdots, S_N= s_N, I_1=1, \cdots, I_N=N $ and $Y_1=y_1, \cdots, Y_N=y_N. $
This means that the jumps are ordered such that neuron $1 $ jumps before neuron $2 $ and etc. 
Let $y=(y^1, \cdots, y^N) $ where $y^i $ is the new state of inhibition of neuron $i $ after the spike.

Let $t_k= s_k-s_{k-1} $ for all $1\leq k\leq N $  the inter jump times of the $N $ neurons which implies that $s_k= t_1+\cdots +t_k. $

Conditionally on this event, let $ \Psi_{s_N} $ be the vector of states of the process at time $s_N. $ We can define $ \Psi_{s_N}$ as a function of the states $ y_1, \cdots, y_N $ such that $ \Psi_{s_N}: \mathbb{R}^N \to  \mathbb{R}^N $ is given by:
\begin{equation*}
\Psi_{s_N}^k(y) =
\begin{cases}
  \psi_{t_N}^{k,N} \circ \cdots \circ \psi_{t_{k+1}}^{k,k+1}(y^k) & \text{, if $ 1\le k<N $ } \\ y^N & \text{, if $ k=N $ }
\end{cases}
\end{equation*}
where for all $l \neq k, $
\begin{equation}\label{position des sauts}
 \psi_{s}^{k,l}(u)=x_{s}^k(u)+W_{l \to k} 
\end{equation}
 and $x_{s}^k(u) $ means  the solution of the deterministic dynamic $ \overset{.}{x}_s^k  = -\alpha_i(x_s^k), \ x_0^k=u. $
   
\begin{remark}\label{eq:deriveePsi}
In the definition of  $\Psi_{s_N}^k (y) ,$  we note that it depends only on $y^k. $ Therefore we have for all $ i \neq j, $   $$ \frac{\partial \Psi_{s_N}^i}{\partial y^j}=0.  $$
\end{remark}

\begin{proposition}\label{eq:jacobian}
For all $1 \le k \le N $ let $\alpha_k $ be a globally Lipschitz function. For all $y \in \mathbb{R}_+^N ,$ there exists an open neighborhood $\mathcal{B} $ of $y $ such that $ \Psi_{s_N}: \mathcal{B} \to \Psi_{s_N}(\mathcal{B}) $ is a local diffeomorphism.
\end{proposition}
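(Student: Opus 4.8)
The plan is to exploit the diagonal structure of the Jacobian of $\Psi_{s_N}$ recorded in Remark \ref{eq:deriveePsi} and reduce the statement to the non-vanishing of the derivative of a one-dimensional ODE flow. First I would observe that, by Remark \ref{eq:deriveePsi}, the Jacobian matrix $D\Psi_{s_N}(y)$ is diagonal, its $k$-th diagonal entry being $\partial \Psi_{s_N}^k/\partial y^k$. Consequently $\det D\Psi_{s_N}(y)=\prod_{k=1}^N \partial \Psi_{s_N}^k/\partial y^k$, and $\Psi_{s_N}$ is a local diffeomorphism near $y$ as soon as $\Psi_{s_N}$ is $C^1$ and each of these $N$ partial derivatives is nonzero, by the inverse function theorem.

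Next I would analyze a single coordinate. For $k=N$ the map is the identity and there is nothing to do. For $1\le k<N$ we have $\Psi_{s_N}^k=\psi_{t_N}^{k,N}\circ\cdots\circ\psi_{t_{k+1}}^{k,k+1}$, where each factor $\psi_{t}^{k,l}(u)=x_t^k(u)+W_{l\to k}$ is the time-$t$ flow of $\dot x=-\alpha_k(x)$ composed with the translation by the constant $W_{l\to k}$. Translations are diffeomorphisms with derivative $1$, so by the chain rule $\partial \Psi_{s_N}^k/\partial y^k$ equals the product, over the relevant inter-jump times $t_{k+1},\dots,t_N$, of the quantities $\partial_u x_t^k$. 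It therefore suffices to show that $u\mapsto x_t^k(u)$ is $C^1$ with strictly positive derivative for every $t\ge 0$.

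This last point is classical ODE theory. Global Lipschitz continuity of $\alpha_k$ guarantees that the flow $x_t^k(\cdot)$ is defined on all of $\mathbb{R}$ for every $t\ge 0$; the $C^1$ regularity of $\alpha_k$ assumed in Theorem \ref{theo:petiteset} then gives, via differentiable dependence on initial conditions, that $u\mapsto x_t^k(u)$ is $C^1$ and that $\eta_t:=\partial_u x_t^k(u)$ solves the variational equation $\dot\eta_t=-\alpha_k'(x_t^k(u))\,\eta_t$, $\eta_0=1$, so that
\begin{equation*}
\partial_u x_t^k(u)=\exp\!\left(-\int_0^t \alpha_k'\big(x_s^k(u)\big)\,ds\right)>0 .
\end{equation*}
Substituting this into the product from the previous step yields $\partial \Psi_{s_N}^k/\partial y^k>0$ for every $k$, hence $\det D\Psi_{s_N}(y)>0$, so $D\Psi_{s_N}(y)$ is invertible; the inverse function theorem then produces the open neighborhood $\mathcal{B}$ of $y$ on which $\Psi_{s_N}$ restricts to a diffeomorphism onto its image.

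The only genuine subtlety — the main obstacle — is ensuring that the flow maps are globally defined and $C^1$ so that both the composition defining $\Psi_{s_N}^k$ and the inverse function theorem apply without domain issues; this is exactly where the global Lipschitz hypothesis (global existence of the flow on $\mathbb{R}$) and the $C^1$ hypothesis on $\alpha_k$ (smooth dependence on initial data) are used. Everything else is the chain rule together with the diagonal form of the Jacobian.
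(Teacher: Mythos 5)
Your proposal is correct and follows essentially the same route as the paper: exploit the diagonal structure of the Jacobian from Remark \ref{eq:deriveePsi}, show each diagonal entry is a product of flow derivatives of the form $\exp\bigl(-\int \alpha_k'(x_s^k(\cdot))\,ds\bigr)>0$ (the paper's formula (\ref{formule6})), and conclude by the inverse function theorem after localizing. Your explicit appeal to the variational equation and to the global Lipschitz/$C^1$ hypotheses just makes transparent what the paper leaves implicit.
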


\begin{proof}

 Let $ J_{\Psi_{s_N}(y)} $ be the Jacobian matrix of  $ \Psi_{s_N}(y) .$ Using the remark \ref{eq:deriveePsi} we have : 
$$ \det(J_{\Psi_{s_N}(y)})= \det
\begin{pmatrix}
 \frac{\partial \Psi_{s_N}^1(y)}{\partial y^1} & \cdots &\frac{\partial \Psi_{s_N}^1(y)}{\partial y^N} \\\\ \vdots & \ddots & \vdots \\\\ \frac{\partial \Psi_{s_N}^N(y)}{\partial y^1} & \cdots & \frac{\partial \Psi_{s_N}^N(y)}{\partial y^N}
\end{pmatrix} =\det
\begin{pmatrix}
 \frac{\partial \Psi_{s_N}^1(y)}{\partial y^1} & 0 &\cdots &0 \\ 0 & \ddots & & \vdots \\ \vdots & & \ddots & 0 & \\ 0 & \cdots & 0 & \frac{\partial \Psi_{s_N}^N(y)}{\partial y^N}
\end{pmatrix}. 
$$
We obtain
$\det( J_{\Psi_{s_N}(y)} )\neq 0 $ if and only if $\prod_{j=1}^N\frac{\partial \Psi_{s_N}^j(y)}{\partial y^j}  \neq 0 $ that is $ \frac{\partial \Psi_{s_N}^j(y)}{\partial y^j} \neq 0, \forall 1\le j \le N.$ 
It is obvious to see that $\frac{\partial \Psi_{s_N}^N(y)}{\partial y^N} = 1.$

For all $1\le j \le N-1, $ we have:
\begin{equation}\label{formule6}
 \frac{\partial \Psi_{s_N}^j}{\partial y^j}(y)= \prod_{i=1}^{N-j} \exp \left({-\int_{s_{N-i}}^{s_{N-(i-1)}} \alpha'_j  \left(x_s^j \left(\Psi^j_{s_{N-i}}(y) \right) \right)ds} \right) \neq 0. 
 \end{equation}

It means that $ |\det(J_{\Psi_{s_N}(y)})| \neq 0 $ then $\Psi_{s_N}(y) $ is a local diffeomorphism. 

 Localizing, we may therefore conclude that for each $ y $ there exists $ \mathcal{B} $ such that  $\Psi_{s_N} :  \mathcal{B} \to \Psi_{S_N}(\mathcal{B}) $ is a diffeomorphism.
\end{proof}

\begin{proof}[Proof of theorem \ref{theo:petiteset}]
Let $ \varepsilon >0 $ fixed. We will work on the event $$E=\{ S_1\leq \varepsilon, \cdots, S_{n+1}-S_n \leq \varepsilon,\forall n<N: ( I_1, \cdots, I_N ) = ( 1, \cdots, N)  \}. $$ In particular, on $ E, $ the index $ I_n $ of the $ n-$th neuron is equal to $ n $ for all $ n \in \{ 1, \cdots, N\}. $ 

Knowing that the first jump takes place at time $ S_1=s_1, $  the probability that the index $ I_1 $ of the first jump is equal to $ 1 $ is given by: 
 $$\mathbb{P}(I_1= 1 | S_1=s_1 )= \mathbb{P}(S_1^{1} < S_1^{j}, \forall j \neq 1 ) = \frac{ \beta_1(x_{s_1}^1(x^1))}{  \sum_{j=1}^N  \beta_j(x_{s_1}^j(x^j)) }.  $$ 
 We want to compute, $\mathbb{P}(I_1=1, \cdots, I_N=N | S_1=s_1, S_2=s_2, \cdots, S_N=s_N  ). $ To obtain a compact formula,
using formula (\ref{position des sauts}) we define \begin{equation*}
 \phi_{j}^k(x^k, y^k, s_1, \cdots, s_N) = 
 \begin{cases}
 \psi_{t_{j}}^{k,j} \circ \cdots \circ \psi_{t_{k+1}}^{k,k+1 } (y^k), & \text{ if $ 1 \le k \le j-1 $} \\
 \psi_{t_{j}}^{k,j} \circ \cdots \circ \psi_{t_{1}}^{k,1 } (x^k), &  \text{ if $ j \le k  \le N $} 
 \end{cases}
 \end{equation*}
 giving the states of neuron $k $ at time $S_j$ depending on whether neuron $k $ jumped before or after time $S_j$.\\
Let $$ x_j^k = x_{t_j}^k(\phi^k_{j-1}(x^k, y^k, s_1, \cdots, s_N)$$ be the state of neuron $k $ before the $j-th $ jump.
 We know that as long as neuron $k $ has not yet jumped, it receives each time a quantity $ W_{j \to k} , \ \forall j \neq k $ from the other neurons that jumped before it. So knowing all the jump times where other neurons jumped, we have:
 \begin{multline*}\label{proba de E}
 \mathbb{P}(I_1=1, \cdots, I_N=N | S_1=s_1, S_2=s_2, \cdots, S_N=s_N  ) = \\
 \frac{ \beta_1(x_{s_1}^1(x^1))}{ \sum_{i=1}^{N}  \beta_i(x_{s_1}^i(x^i)) } \int_{\mathbb{R}_{+}^{N-1} }\frac{ \prod_{i=2}^{N}  \beta_i(x_{i}^i)}{  \prod_{i=2}^{N} (\sum_{k=1}^{N}\beta_ k( x^k_{i}))} \prod_{k=1}^{N-1}\mathbb{P}(Y^k \in dy^k).
\end{multline*}

For any Borel subset $B $ of $\mathbb{R}^N $ we have 
 \begin{multline*}
  Q^N(x, B) \geq \mathbb{P}_x(Z_N \in B, E)= \int_{[0,\varepsilon]^N}dt_1\cdots dt_N  \int_{\mathbb{R}^N}F^1(dy^1) \cdots F^N(dy^N) \\
 \times (\prod_{k=1}^N \beta_k(x_k^k))e^{-\int_0^{s_N}\sum \beta_k(\Psi_t^k(y))dt} \mathbf{1}_B(\Psi_{s_N}(y)).
 \end{multline*}
 
 Remark that on the event $E, $ $x_k^k \leq x^k+ \sum_{j=1}^{k-1} W_{j\to k}.$ 
Recall $\beta_k $ is decreasing function and let $\mu_k = \inf_k \{ \beta_k( x^k+ \sum_{j=1}^{k-1} W_{j\to k}): x \in K \} $ the  lower-bound on $K $ of $ \beta_k (x^k + \sum_{j=1}^{k-1} W_{j\to k}).$  
 
 Using the fact that $\| \beta_i\|_\infty <\infty$ for all $i, $ let $c = (\prod_{k=1}^N \mu_k )e^{-N\| \beta_i\|_\infty N\varepsilon}. $ 
 Then we have 
 \begin{equation}\label{eq:majorationQn}
 Q^N(x, B) \geq c \int_{[0,\varepsilon]^N}dt_1\cdots dt_N  \int_{\mathbb{R}^N}F^1(dy^1) \cdots F^N(dy^N) \mathbf{1}_B(\Psi_{s_N}(y)). 
\end{equation}  
 
Following the arguments of Bena\"{i}m et al. \cite{BLMZ}, for any $t^*  \leq N\varepsilon, $ there exists a ball $B_r(t^*) $ of radius $r, $ of center $t^* $ and an open set $I \subset \mathbb{R}^N $ such that we can find for all $ s_N\in B_r(t^*), $ an open set $W_{s_N} \subset \mathbb{R}^N : $
\begin{equation*}
\tilde{\Psi}_{s_N}: 
\begin{cases}
W_{s_N} \to I \\ 
y \mapsto \Psi_{s_N}(y)
\end{cases}
\end{equation*}
is a diffeomorphism (see Bena\"{i}m et al. \cite{BLMZ}, Lemma 6.2). In the above formula, $\tilde{\Psi}_{s_N} $ denotes the restriction of $\Psi_{s_N} $ to $W_{s_N}. $
This allows us to apply the theorem of a change of variables in the inequality (\ref{eq:majorationQn}).

 $ \alpha'_j  \left(x_s^j \left(\Psi^j_{s_{N-i}}(y) \right) \right) $ is upper bounded since $ \alpha_j $ is a global Lipschitz function. Then, for all $1 \le j \le N-1 $ we obtain: $$ \frac{\partial \Psi_{s_N}^j}{\partial y^j}(y) \leq  \exp (N-j)\varepsilon \| \alpha_j ' \|_{\infty}.$$

Then, $ \forall y\in W_{S_N}, c \,| \det(J_{\Psi_{s_N}(y)}) |^{-1} \geq c'>0 $ and 
the inequality (\ref{eq:majorationQn}) becomes :   
\begin{eqnarray*}
Q^N(x, B) &\geq &  c \int_{[0,\varepsilon]^N}dt_1\cdots dt_N \int_{\mathcal{B}}F^1(dy^1) \cdots F^N(dy^N) \mathbf{1}_B({\Psi}_{s_N}(y))\\
&\geq &  c'  \int_{B_{r}(t^*)}dt_1\cdots dt_N \int_{W_{s_N} \cap \mathcal{B} }dy \mathbf{1}_B(\tilde{\Psi}_{s_N}(y)) | \det(J_{\tilde{\Psi}_{s_N}(y)}) | \\
&\geq & c'  \lambda(B_{r}(t^*)) \int_I  \mathbf{1}_B(x)dx \, = \, d \mathbf{1}_B(x) \nu(I)  
\end{eqnarray*}
where $ d=  c'  \lambda(B_{r}(t^*)) $ with $\lambda(B_{r}(t^*)) $ the Lebesgue measure of the ball $B_{r}(t^*) $ and $\nu(I) $ the uniform measure of $I.$

 \end{proof}
 
\begin{corollary}
If for all $k \le N,  $ $\beta_k $ is strictly lower-bounded and bounded, then the process is recurrent. \end{corollary}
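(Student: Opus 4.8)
The plan is to read the corollary as a direct consequence of Theorem~\ref{theo:petiteset} together with the non-evanescence established just above, both fed into the recurrence criteria of Meyn and Tweedie \cite{MT}; throughout, the standing assumption $\rho(H)<1$ of this section is in force.

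First I would observe that the hypothesis ``$\beta_k$ strictly lower-bounded and bounded'', i.e.\ $\beta_k(x^k)\in[\beta_*,\beta^*]$ with $0<\beta_*\le\beta^*<\infty$, is exactly what is needed to invoke Theorem~\ref{theo:petiteset}: for any compact $K\subset\,]0,\infty[^N$ and any $1\le k\le N$ one has $\beta_k\bigl(x^k+\sum_{j=1}^{k-1}W_{j\to k}\bigr)\ge\beta_*>0$, so the positivity condition on $K$ holds automatically, and $\|\beta_k\|_\infty\le\beta^*<\infty$; the remaining hypotheses of Theorem~\ref{theo:petiteset} ($\alpha_i\in\mathcal C^1$, $F^i$ absolutely continuous) are understood. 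Theorem~\ref{theo:petiteset} then gives $Q^N(x,dy)\ge d\,\mathbf 1_K(x)\,\nu(dy)$, so $K$ is a small, hence petite, set for the embedded chain $Z_n=X_{S_n}$. I would also record that $(Z_n)$ is $\psi$-irreducible: from any $x$, the explicit transition kernel $Q$ and the absolute continuity of the $F^i$ let the chain reach $K$ in $N$ steps with positive probability (spikes ordered $1,\dots,N$ within a short window, exactly as in the proof of Theorem~\ref{theo:petiteset}), after which the minorization spreads it over $\nu$, so $\nu$ is an irreducibility measure.

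Next I would use the non-evanescence proposition above, valid since $V(x)=\sum_i m_i x^i$ is norm-like and $G^NV\le 0$ on $\mathbb R_+^N$ when $\rho(H)<1$: $\mathbb P_x$-a.s.\ the process visits a fixed compact set $K_0$ infinitely often. Since $(Z_n)$ is $\psi$-irreducible, from every point of $K_0$ the chain enters the petite set $K$ with probability bounded below by a positive constant; combining this with the infinitely many visits to $K_0$ through the strong Markov property and a conditional Borel--Cantelli argument yields $L(x,K)=1$ for all $x$. A $\psi$-irreducible chain that reaches a petite set from every state with probability one is Harris recurrent (\cite{MT}, Chapter~9), so $(Z_n)$ is Harris recurrent; and since $S_1<\infty$ a.s.\ by Proposition~\ref{eq:tempsfini}, so is the PDMP $X_t$. (Positive recurrence, when the invariant measure produced by the small-set and drift conditions is finite, follows as well.)

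I expect the only real difficulty to be the passage from ``$K$ is a small set'' to ``$K$ is reached from every state almost surely'': the minorization of Theorem~\ref{theo:petiteset} only controls $Q^N(x,\cdot)$ for $x\in K$, so one genuinely needs non-evanescence (hence $\rho(H)<1$) together with $\psi$-irreducibility to guarantee that the chain does not escape to infinity but re-enters $K$; once this is secured, the remaining steps are routine verifications of the Meyn--Tweedie conditions.
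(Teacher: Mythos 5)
Your route does not match the paper's and it imports a hypothesis the corollary does not contain. The paper's intended argument (made explicit in the remark immediately following the corollary) is much shorter: under the hypothesis $\beta_k(x^k)\in[\beta_*,\beta^*]$ with $\beta_*>0$, the only place the compact $K$ enters the proof of Theorem~\ref{theo:petiteset} is the lower bound $\mu_k=\inf\{\beta_k(x^k+\sum_{j<k}W_{j\to k}):x\in K\}$; with a strict global lower bound one has $\mu_k\ge\beta_*>0$ for \emph{all} $x$, so the constant $c$ and hence the minorization hold without the indicator $\mathbf 1_K$, giving the global Doeblin bound $Q^N(x,dy)\ge d\,\nu(dy)$ for every $x$. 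The whole state space is then a small set for the embedded chain, which yields uniform ergodicity and in particular recurrence, with no Lyapunov function and no condition on $\rho(H)$ whatsoever. By contrast, you declare ``the standing assumption $\rho(H)<1$ of this section is in force'' and build your proof on the non-evanescence proposition; but that assumption belongs to the Foster--Lyapunov subsection, is not part of the corollary's hypotheses, and is not needed. As written, your argument proves a weaker statement than the one asserted.

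Beyond that, the step you yourself flag as delicate is indeed a genuine gap in your proposal: the non-evanescence proposition concerns the continuous-time process visiting a compact $K_0$ infinitely often, while the minorization concerns the embedded chain $Z_n=X_{S_n}$; passing from the former to ``$Z_n$ enters the small set $K$ almost surely from every state'' requires a uniform lower bound on the $N$-step entrance probability over $K_0$ and a careful conditional Borel--Cantelli argument, none of which you supply. Note also that your sketch of $\psi$-irreducibility (from any $x$, order the spikes $1,\dots,N$ in a short window and use $\beta_k\ge\beta_*>0$) is, when carried out, exactly the computation showing that the minorization of Theorem~\ref{theo:petiteset} holds globally --- at which point the detour through $\rho(H)<1$, non-evanescence and Harris-recurrence criteria becomes unnecessary. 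The efficient proof is to observe the global bound directly and conclude.
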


\begin{remark}
When $ \beta_k $ is strictly lower-bounded and bounded, we can notice that the lower bound obtained in theorem \ref{theo:petiteset} holds on the whole state space $\mathbb{R}_+, $ that is, without $1_K. $ This allows us to have the global lower bound $ Q^N(x, dy) \geq d \nu(dy) $ and thus the uniform ergodicity of the process.
\end{remark}

\section{Perfect simulation}
In this section, we consider a framework with an infinity of neurons indexed by $\mathbb{Z}.$
We want to build a perfect simulation algorithm to show in another way the recurrence of our process under certain conditions. Let  $\mathcal{V}_{. \to i } = \{ j: W_{j \to i} \neq 0\} $ and $\mathcal{V}_{i \to . } = \{ j: W_{i \to j} \neq 0\} $ be the incoming and out-coming neighborhoods of the neuron $ i $ (see Comets et al. \cite{CFF} and  Galves and L\"{o}cherbach \cite{GL}).

We consider the case where each neuron has a finite number of neighbors. 

We assume that for all $i $ the jump rate $\beta_i(x^i) $ is bounded, that is, $\beta_i(x^i) \in [\beta_*, \beta^*] $ for all $x^i>0, $ where $0<\beta_* < \beta^* < \infty. $

The following variables will be used to write the perfect simulation algorithm:

\begin{enumerate}
\item[-] $ T $ is the time vector
\item[-] $ P $ is the matrix of states where each row of this matrix represents the different states of the $ N $ neurons at a fixed instant
\item[-] $ I $ is the vector which represents the index of the neuron which spikes.
\end{enumerate}

We fix a neuron $ i \in \mathbb{Z} $ and in what follows we are interested in finding the state of $i $ at time $0 $ in the stationary regime, that is, assuming that the process starts from $-\infty. $ To do so we explore the past in order to determine all sets of indices and times which affect the value of neuron $i $ at time $0.$ The set of all such couples $(j,s ) $ will be called the clan of ancestors of neuron $i $ (see  Galves and L\"{o}cherbach \cite{GL}, Galves et al. \cite{GLO}). The clan of ancestors is a process that evolves in time by successive jumps. We start with $C_0^i = \{ i \} $ and in the following we will define the updates of this process at the time of the jumps. More precisely we do the following: 

\begin{enumerate}
\item[-] We simulate , $\forall \; l \in \mathbb{Z}, \;\; N_t^{l,s} \text{ and } N_t^{l,p} $ two Poisson processes with respective intensities $\beta_* $ and $\beta^*-\beta_*. $ The jump times of $ N_t^{l,s} $ and $N_t^{l,p} $ are respectively $T_n^{l,s} $ and $T_n^{l,p} $ for the neuron $l $ after $n $ jumps.\\

\item[-] Let $ i \in \mathbb{Z} $ be fixed  and   $ T_1= \inf \{ T_1^{j, s}, T_1^{j, p} >0: \; j  \in \mathcal{V}_{. \to i} ,\;  T_1^{i,s}>0 \} $ where $ \mathcal{V}_{ . \to i}$ is the incoming neighborhood of $i. $ 

-If $T_1= T_1^{j,s}  , $ we set $C_{T_1}^i  = \{ i\}  $ and $I_1= j.$

- If $T_1= T_1^{j,p}  , $ we set $C_{T_1}^i  = \{ i, j\} $  and we set $I_1= j .$

- If $T_1= T_1^{i,s}, $ we set $C_{T_1}^i = \emptyset $ and we stop the algorithm. In this case we set $I_1= i.$\\

\item[-] Suppose $T_n $ is the $n-th $ jump time of $C_{T_n}^i $. Then, $$ T_{n+1}=\inf\{T_m^{j, s}, T_m^{j, p}>T_n: \exists l \in  C_{T_n}^i,  \; j  \in \mathcal{V}_{. \to l}, \;  T_m^{k,s}>T_n, \; k\in  C_{T_n}^i \}. $$

- If $T_{n+1}= T_m^{j, s}  $ we set $ I_{n+1} = j $ and then  $C_{T_{n+1}}^i=C_{T_n}^i $ 

- If $T_{n+1}= T_m^{j, p}  $ we set $ I_{n+1} = j $ and then  $C_{T_{n+1}}^i=C_{T_n}^i \cup \{j\} $ 

- If $T_{n+1}= T_m^{k,s} $ we set $ I_{n+1} = k $ and then $ C_{T_{n+1}}^i=C_{T_n}^i \setminus \{k\} $ where $ k\in  C_{T_n}^i. $

\end{enumerate}

We stop the procedure at time $T_{stop}^i = \inf\{ t : C_t^i = \emptyset\}. $ To ensure that the algorithm stops it will be necessary to find a criterion so that $ T_{stop}^i < \infty.$ This will be done in Theorem \ref{theo:val critique} below.
The above algorithm is called the backward procedure.

%{\color{red} Pour la these.  The graph below is a typical example of the simulation of the two Poisson processes $N_t^{l,s} $ and $N_t^{l,p} $ of respective intensities $\beta_* $ and $\beta^*-\beta_*.$
%\begin{center}
%\includegraphics[scale=0.4, height=6cm, width=15cm]{RPoissProcesses}
%\end{center}
%The process $N_t^{l,s} $ is represented in black and the process $N_t^{l,p} $ in red. In this example $N= 6 $ neurons, $\beta_* = 3 $ and $\beta^*-\beta_* = 1.$ }

In the following we will write a forward procedure of the process in case where each neuron has a finite number of neighbors.  \\
For this we define: $$ N_{stop}^i = \inf\{ n >0: C_{T_n}^i= \emptyset \}, \    \bar{\mathcal{C}}^i = \cup_{n=0}^{{N}^i_{stop}} C_{T_n}^i \text{ and } \partial_{ext}(C_t^i) = \{ j \notin C_t^i : \exists k\in C_t^i,  j\to k  \} $$ 
where $ N_{stop}^i $ is the number of steps of the backward procedure, $ \bar{\mathcal{C}}^i  $ is the union of all clans of ancestors up to $N^i_{stop}$ and $ \partial_{ext}(C_t^i) $ is the set of neurons not belonging to the clan of ancestor  of neuron $i $ but having an interaction with at least one neuron in the ancestor clan of neuron $i.$

In this algorithm, we will rely on the a priori realizations of the processes  $  N_t^{i,s} \text{ and } N_t^{i,p}.  $

\textbf{Algorithm (forward procedure)}

\begin{enumerate}
\item We initialize the set of sites for which the decision to accept  can be made by 
$$ \mathcal{S}^i= \{ (I_m, T_m) \in \bar{\mathcal{C}}^i \times \mathbb{R}_+,  C_{T_m}^{I_m} = \emptyset\} $$

\item[] For $n =  N_{stop}^i  \text{ we have } P_n^{I_n} \sim F^{I_n} $. Starting from $n \to n-1 :$

\item If  $ (I_{n-1}, T_{n-1} ) \in \mathcal{S}^i $ then $ P_{n-1}^{I_{n-1}} \sim F^{I_{n-1}} .$ \\
- If for $ j \in \mathcal{V}_{  I_{n-1} \to . }, $ we have $ j \in C_{T_{n-1}}^i $ then  $$ P_{n-1}^j = x_{T_n-T_{n-1}}^{j } (P_{n}^{j }) + W_{ I_{n-1}  \to j }$$
- If for $ j \notin \mathcal{V}_{  I_{n-1} \to . }, $ we have  $ j \in C_{T_{n-1}}^i $ then $$ P_{n-1}^j = x_{T_n-T_{n-1}}^{j } (P_{n}^{j }) $$ 
\item If $k:= I_{n-1} \in \partial_{ext} (C^i_{T_{n-1}}), $ we have  $$ P_{n-1}^{l } = x_{T_n-T_{n-1}}^{l } (P_{n}^{{l}}) + W_{ k \to { l } } $$   where there exists  $ l $  such that $ k\to l \in C^i_{T_{n-1}} $
\item  If $ (I_{n-1}, T_{n-1} ) \in (\bar{\mathcal{C}}^i \times \mathbb{R}_+) \setminus \mathcal{S}^i $ then: \\
- We decide according to the probabilities $$ p= \frac{\beta ( x_{T_n- T_{n-1}}^{I_{n-1}} (P_n^{I_{n-1}})) - \beta_*}{\beta^*-\beta_*} $$ to accept the presence of a spike of neuron $I_{n-1}$.\\

\item[] We update $$ \mathcal{S}^i \gets \mathcal{S}^i \cup \{ (I_m, T_m)\in \bar{\mathcal{C}}^i \times \mathbb{R}_+, C_{T_m}^{I_m}  \subset \mathcal{S}^i\} $$  and go back to step 2. \\
- Else with the probabilities $1-p $ we reject the presence of a spike of neuron $I_{n-1} $ and  $P_{n-1}^{I_{n-1}} = x_{T_n-T_{n-1}}^{I_{n-1} } (P_{n}^{I_{n-1} }). $

We consider all the elements of $  \mathcal{S}^i $ and we always start with the last element to get out of the clan. The update of $ \mathcal{S}^i $ allows us to start the procedure again.

\end{enumerate}
We stop the procedure when all the elements of $\bar{\mathcal{C}}^i$ are filled.

\begin{remark}
 For any site $ (i,t) \in \mathbb{Z} \times \mathbb{R}_+, $ $ C_t^i $ is a Markov jump process taking values in the finite subset of  $\mathbb{Z} $ (see Galves et al. \cite{GLO}) and its infinitesimal generator is given by 
\begin{multline*}
A^{clan}g(C) = \sum_{j \in C}\beta_*[g(C \setminus \{j\}) -g(C)] +  \sum_{j \in \partial_{ext}(C)} (\beta^* - \beta_*)[g(C \cup \{j\}) - g(C)]
\end{multline*}
where $g $ is a test function.
\end{remark}

\begin{proposition}\label{mort-naissance}
Let $d_j = \min_{x^j} \beta_j (x^j), \text{ } d^j = \max_{x^j} \beta_j (x^j) $ and $ b_j =  \sum_{k \to j} (d^k -d_k). $ If $b_j < d_j $ for all $j $ then  $T^i_{stop} $ is finite almost surely i.e. the process is subcritical.
\end{proposition}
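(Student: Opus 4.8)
The plan is to control the size of the clan of ancestors $(C^i_t)_{t\ge 0}$, started from $C^i_0=\{i\}$, and show that it decays exponentially in mean. I would first observe that, running the backward construction with the site-dependent splitting of the spiking rate --- taking the ``sure'' rate of neuron $j$ to be $d_j=\min_{x^j}\beta_j(x^j)$, so that the ``uncertain'' part satisfies $\beta_j(x^j)-d_j\in[0,\,d^j-d_j]$ --- the clan becomes the Markov jump process on the finite subsets of $\mathbb{Z}$ whose generator is the one displayed in the Remark above, but with the uniform rates $\beta_*$ and $\beta^*-\beta_*$ replaced by site-dependent ones: each $j\in C$ is deleted at rate $d_j$, and each $j\in\partial_{ext}(C)$ is inserted at rate $d^j-d_j$ (an uncertain spike of $j$, relevant precisely because $j$ influences some member of $C$). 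Since $\{\emptyset\}$ is absorbing and $T^i_{stop}=\inf\{t:C^i_t=\emptyset\}$, it is then enough to show $\mathbb{P}_i(C^i_t\neq\emptyset)\to 0$ as $t\to\infty$; I aim for the stronger bound $\mathbb{E}_i[\,|C^i_t|\,]\le e^{-\varepsilon t}$.

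\textbf{Lyapunov estimate.} The key step is to apply the generator to the linear function $V(C)=|C|$. The deletion terms contribute $-\sum_{j\in C}d_j$ and the insertion terms contribute $+\sum_{j\in\partial_{ext}(C)}(d^j-d_j)$; since each $j\in\partial_{ext}(C)$ has a neighbour $k\in C$ with $j\to k$, one has $\partial_{ext}(C)\subseteq\bigcup_{k\in C}\{j:j\to k\}$, and hence $\sum_{j\in\partial_{ext}(C)}(d^j-d_j)\le\sum_{k\in C}\sum_{j\to k}(d^j-d_j)=\sum_{k\in C}b_k$. This yields
\[
A^{clan}V(C)\ \le\ -\sum_{k\in C}(d_k-b_k)\ \le\ -\varepsilon\,|C|\ =\ -\varepsilon\,V(C),\qquad \varepsilon:=\inf_k(d_k-b_k),
\]
with $\varepsilon>0$ by the hypothesis $b_j<d_j$ for all $j$ (positivity of the infimum being ensured by the standing bounds $\beta_j(x^j)\in[\beta_*,\beta^*]$ and the uniform boundedness of the in-degrees).

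\textbf{Conclusion.} From there I would first note that $C^i_t$ does not explode --- its jump rate out of any finite state is finite because each neuron has finitely many neighbours, and $A^{clan}V\le 0$ for the norm-like $V$ excludes explosion by the standard localization argument --- so that Dynkin's formula applies to $V(C^i_t)$. Combined with $A^{clan}V\le-\varepsilon V$ and Gr\"{o}nwall's lemma it gives $\mathbb{E}_i[\,|C^i_t|\,]\le|C^i_0|\,e^{-\varepsilon t}=e^{-\varepsilon t}$, and since $|C^i_t|$ is integer-valued,
\[
\mathbb{P}_i\bigl(T^i_{stop}>t\bigr)=\mathbb{P}_i\bigl(C^i_t\neq\emptyset\bigr)=\mathbb{P}_i\bigl(|C^i_t|\ge 1\bigr)\ \le\ \mathbb{E}_i[\,|C^i_t|\,]\ \le\ e^{-\varepsilon t}\ \longrightarrow\ 0,
\]
so $T^i_{stop}<\infty$ almost surely (indeed with an exponential tail), i.e. the process is subcritical.

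\textbf{Main obstacle.} The generator computation itself is routine; the delicate points are (i) pinning down the exact transition rates of $C^i_t$ in the site-dependent refinement --- in particular that an uncertain spike of a boundary neuron $j$ enlarges the clan at rate \emph{exactly} $d^j-d_j$ and no faster --- and (ii) justifying non-explosion so that Dynkin's formula is legitimate. I would also stress that it is the per-site inequality $b_j<d_j$ that makes the linear $V(C)=|C|$ succeed; under a merely averaged condition one would instead take $V(C)=\sum_{j\in C}w_j$ with $w$ a suitable positive eigenvector of the mean-offspring matrix $\bigl((d^j-d_j)\mathbf{1}\{j\to k\}\bigr)_{k,j}$, mirroring the use of Perron--Frobenius in Section 3.
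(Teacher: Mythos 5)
Your core comparison is the same one the paper uses: you bound the clan's insertion rate by $\sum_{k\in C}b_k$ via $\partial_{ext}(C)\subseteq\bigcup_{k\in C}\{j:\,j\to k\}$ and play it against the deletion rate $\sum_{k\in C}d_k$. The paper does exactly this, but on the embedded jump chain: it dominates $|C^i_{T_n}|$ by a $\pm 1$ chain $Z_n$ that steps down with probability $\sum_j d_j/\sum_j(b_j+d_j)$ and up with probability $\sum_j b_j/\sum_j(b_j+d_j)$, leaving the conclusion (absorption of a walk whose downward step is always the more likely one) implicit. Your continuous-time version --- generator applied to $V(C)=|C|$, Dynkin plus Gr\"{o}nwall --- is a legitimate alternative execution and, when it applies, buys more than the paper states, namely an explicit exponential tail $\mathbb{P}(T^i_{stop}>t)\le e^{-\varepsilon t}$; your identification of the site-dependent clan rates and your treatment of non-explosion are fine.

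The one step that does not hold as written is the positivity of $\varepsilon=\inf_k(d_k-b_k)$. With neurons indexed by $\mathbb{Z}$, the pointwise hypothesis $b_j<d_j$ gives no uniform gap, and your parenthetical justification is incorrect: $\beta_j\in[\beta_*,\beta^*]$ together with uniformly bounded in-degrees is perfectly compatible with $d_j-b_j\to 0$ (arrange the $d^k-d_k$ so that, say, $b_j=d_j-1/(1+|j|)$). If $\varepsilon=0$ your Gr\"{o}nwall bound is vacuous and the proof does not close. Either state the uniform gap $\inf_j(d_j-b_j)>0$ as an assumption, or repair the argument so that only the pointwise inequality is needed: for instance, note that the embedded chain $|C^i_{T_n}|$ is a nonnegative supermartingale whose steps are $\pm1$ until absorption, hence it converges a.s., which is only possible if it is absorbed at $0$ after finitely many jumps (and each $T_n$ is finite by non-explosion, so $T^i_{stop}<\infty$); this is essentially the paper's birth--death comparison, for which the strict inequalities $b_j<d_j$ suffice without uniformity. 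Your suggested weighted $V(C)=\sum_{j\in C}w_j$ is another viable patch.
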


\begin{proof}
We shall construct a process $Z_n $ such that for all $n, $ 
$ |  C_{T_n}^i | \leq Z_n $ and such that $Z_n $ evolves as follows:  with probability  $ \frac {\sum_{j \in {Z_n}} d_j}{\sum_{j \in {Z_n}} (b_j +d_j)} $ we have $Z_{n+1} = Z_n -1 $ and with probability $ \frac{\sum_{j \in  Z_n} b_j}{{\sum_{j \in Z_n} (b_j +d_j})} $ we have $Z_{n+1}  =  Z_n +1. $

\end{proof}

In this general case where a neuron has a finite number of neighbors (more than two neighbors) with which it interacts, we can say no more than proposition \ref{mort-naissance}.  Thus, in the following, we put ourselves in the case where each neuron $ i $ has exactly two neighbors so that the neuron $ i $ interacts only with the neurons $ i+1 $ and $ i-1. $ In other words, the incoming neighborhood of $ i $ is $\mathcal{V}_{. \to i} = \{i+1, i-1\}. $ 

\textbf{Algorithm (backward procedure)}

\begin{enumerate}
\item We simulate , $\forall \; l \in \mathbb{Z}, \;\; N_t^{l,s} \text{ and } N_t^{l,p} $ two Poisson processes with respective intensities $\beta_* $ and $\beta^*-\beta_*. $ The jump times of $ N_t^{l,s} $ and $N_t^{l,p} $ are respectively $T_n^{l,s} $ and $T_n^{l,p} $ for the neuron $l $ after $n $ jumps. The jump times $T_n^{l,s} $ will be considered as times of sure jumps (counted by the process $ N_t^{l,s} $) and the jump times $T_n^{l,p} $ will be considered as times of possible jumps (counted by the process $ N_t^{l,p} $)\\

\item Let $ i \in \mathbb{Z} $ fix and   $ T_1= \inf \{ T_1^{i \pm 1, s}, T_1^{i \pm 1, p},\;  T_1^{i,s} \}. $ We set $I_1= i \pm 1$

- If $T_1= T_1^{i \pm 1,s}, $ we set $C_{T_1}^i = i $ and we  put $I_1= i \pm 1 .$\\

- If $T_1= T_1^{i \pm 1,p}  , $ we set $C_{T_1}^i  = \{ i, i \pm 1\}. $ We put $I_1= i \pm 1$\\

- If $T_1= T_1^{i,s}, $ we set $C_{T_1}^i = \emptyset $ and we stop the algorithm. We put $I_1= i .$\\

\item Suppose $T_n $ is the $n-$th jump time of $C_{T_n}^i. $ 
 We have: $$ T_{n+1}=\inf\{T_m^{j, s},  T_m^{j, p}>T_n: |j-C_{T_n}^i|\leq 1, \;  T_m^{k,s} > T_n, \; k\in  C_{T_n}^i \}. $$
- If $T_{n+1}= T_m^{j,p} $ we set: \begin{equation*} 
\begin{cases}
\text{If }j\in C_{T_n}^i, \;\; C_{T_{n+1}}^i=C_{T_n}^i \\ \text{If }j \notin C_{T_n}^i, \;\; C_{T_{n+1}}^i=C_{T_n}^i \cup \{j\}
\end{cases}
\end{equation*}
- If $T_{n+1}= T_m^{k,s} $ we set: \begin{equation*} 
\begin{cases}
\text{If } k\in C_{T_n}^i, \;\; C_{T_{n+1}}^i=C_{T_n}^i \setminus \{k\} \\ \text{If } k \notin C_{T_n}^i, \;\; C_{T_{n+1}}^i=C_{T_n}^i 
\end{cases}
\end{equation*}
We update $C_t^i $ and start the procedure again. We stop the procedure at time $T_{stop}^i $ where $ T_{stop} ^i= \inf\{ t : C_t^i = \emptyset\}.$
\end{enumerate}

Indeed, the whole procedure makes sense only if $ T_{stop} ^i < \infty $ almost surely.

%{\color{red} A enlever cette remarque de la these et remettre l'algo forward}
\begin{remark}
The forward procedure is the same as in the first case where each neuron has a finite number of neighbors.
\end{remark}

The following theorem gives conditions on the finitude of the extinction time.

\begin{theorem} \label{theo:val critique}
 We set $\delta=\frac{\beta_* }{\beta^*-\beta_*}. $
There exists a critical value $0<\delta_c < \infty $ such that:
\begin{enumerate}
\item[-] if $\delta > \delta_c, $ then the extinction time is finite almost surely that is, $ \mathbb{P}( \forall i, \ T_{stop}^i < \infty)=1 $
\item[-] if $\delta < \delta_c, $ then the extinction time is infinite with a positive probability that is, $ \mathbb{P}(\forall i, \ T_{stop}^i = \infty) >0. $
\end{enumerate}
\end{theorem}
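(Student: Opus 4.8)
The plan is to translate the theorem into a survival/extinction dichotomy for the clan-of-ancestors process $C_t := C_t^i$, which by the Remark preceding the statement is a Markov jump process on the finite subsets of $\mathbb{Z}$ with generator $A^{clan}$: in the present two-neighbour setting each occupied site is erased at rate $\beta_*$ and each site of $\partial_{ext}(C)$ is adjoined at rate $\beta^*-\beta_*$. The total jump rate out of a configuration of size $n$ is at most $n\bigl(\beta_* + 2(\beta^*-\beta_*)\bigr)$, so the process is non-explosive and $T_{stop}$ is well defined; after a deterministic time change we may assume $\beta^*-\beta_* = 1$, so that $\delta = \beta_*$ is the only parameter (deaths at rate $\delta$ per occupied site, births at rate $1$ per outer-boundary site). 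Writing $p(\delta) := \mathbb{P}_{\{0\}}(T_{stop} = \infty)$, which by translation invariance does not depend on the base neuron, it suffices to prove that $p(\delta) = 0$ for all large $\delta$, that $p(\delta) > 0$ for all small $\delta$, and that $p$ is monotone; then $\delta_c := \sup\{\delta > 0 : p(\delta) > 0\}$ does the job.

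First I would establish monotonicity by a Harris-type graphical construction carried out simultaneously for all $\delta$: independent rate-$1$ Poisson families of birth marks attached to the oriented edges of $\mathbb{Z}$, and for each site a Poisson family of death marks of some maximal rate thinned down to rate $\delta$, with the thinnings nested in $\delta$. Reading the clan dynamics off these marks and inducting on the successive marks shows that $\delta \le \delta'$ forces $C_t^{(\delta')} \subseteq C_t^{(\delta)}$ for all $t$ when the two are started from the same set; since an empty clan stays empty, this gives $\{T_{stop}^{(\delta)} < \infty\} \subseteq \{T_{stop}^{(\delta')} < \infty\}$ and hence $p(\delta') \le p(\delta)$. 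With monotonicity in hand, the two bullet points of the theorem reduce to showing $0 < \delta_c < \infty$.

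For $\delta_c < \infty$ I would bound the clan from above: in the two-neighbour case $|\partial_{ext}(C)| \le 2|C|$, so $|C_t|$ is stochastically dominated by the linear birth–death process in which each particle dies at rate $\delta$ and branches at rate $2$. That process is subcritical as soon as $\delta > 2$, hence gets absorbed at $0$ almost surely, so $T_{stop} < \infty$ a.s. and $p(\delta) = 0$ for $\delta > 2$; thus $\delta_c \le 2$. The main obstacle is the reverse bound $\delta_c > 0$, i.e. survival of the clan with positive probability for $\delta$ small, and here I would run the classical contour (Peierls) argument of Griffeath \cite{Gri} as adapted by Ferrari et al. \cite{FGGL}. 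Discretising time on a grid of small mesh $\varepsilon$ and recording in each space–time cell whether a death mark hit an occupied site and whether the relevant birth bridges are present, one checks that $\{T_{stop} < \infty\}$ forces the existence of a self-avoiding dual contour that encircles the occupied space–time region and separates the origin from infinity. The number of such contours of length $n$ is at most $c\,\mu^n$ for a fixed combinatorial constant $\mu$, whereas a prescribed contour is realised with probability at most $q(\delta)^n$, where $q(\delta) \to 0$ as $\delta \to 0$ because a long contour must carry many death marks and those are rare when $\delta$ is small; choosing $\delta$ so small that $\sum_{n \ge n_0} c\,\mu^n q(\delta)^n < 1$ gives $\mathbb{P}_{\{0\}}(T_{stop} < \infty) < 1$, i.e. $p(\delta) > 0$.

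Assembling the pieces, $\delta_c = \sup\{\delta : p(\delta) > 0\}$ lies in $(0, 2]$, and monotonicity of $p$ yields $p(\delta) = 0$ for $\delta > \delta_c$ — equivalently $T_{stop}^i < \infty$ a.s. for every $i$, a countable union of null events — and $p(\delta) > 0$ for $\delta < \delta_c$, which gives the second assertion. The delicate points, all of which are precisely those treated in \cite{Gri} and \cite{FGGL}, are: making the bookkeeping of the dual contour precise so that extinction genuinely produces a closed separating contour; checking that the ``there is a death mark / there is no birth bridge'' constraints along a contour are sufficiently weakly dependent to justify the $q(\delta)^n$ bound; and verifying that the discretisation error disappears as $\varepsilon \to 0$. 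The only structural difference from the contact process is that births here occur at a fixed rate on the outer boundary rather than proportionally to the number of occupied neighbours, which does not affect the contour estimates.
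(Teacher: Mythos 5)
Your proposal is correct in substance and follows the same two-sided skeleton as the paper: dominate the clan size by a linear birth--death (branching) process to get almost sure extinction when deaths dominate, and run the Griffeath/Ferrari et al.\ contour (Peierls) estimate to get survival with positive probability when $\delta$ is small. The genuine differences are worth noting. First, you add a monotone coupling in $\delta$ (shared birth marks, nested thinned death marks) and define $\delta_c=\sup\{\delta: p(\delta)>0\}$; this is what actually turns the two one-sided estimates into the clean dichotomy at a single threshold, since the explicit bounds only give extinction for $\delta$ above some value and survival for $\delta$ below a much smaller one -- the paper's proof stops at exactly those two non-matching bounds (extinction for $\delta>1$, survival for $\delta$ below the root of its contour series) and leaves the intermediate regime to an implicit monotonicity, so your extra step is a real improvement in rigor rather than a detour. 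Second, your domination uses the correct boundary bound $|\partial_{ext}(C)|\le 2|C|$, giving subcriticality for $\delta>2$; the paper's comparison process has birth rate $n(\beta^*-\beta_*)$, i.e.\ it implicitly bounds the outer boundary by $|C|$, which fails already for a singleton, so your constant is the safe one (the price being $\delta_c\le 2$ instead of $\le 1$). Third, your contour step is only sketched: you propose a time-discretized dual-contour bookkeeping and defer the entropy--energy estimate ($c\,\mu^n$ contours versus $q(\delta)^n$ per contour) to the references, whereas the paper carries out the continuous-time version explicitly by encoding contours as sequences of direction triplets, counting at most $16^n$ of them and bounding each by $\delta^{n/2}$; to make your write-up self-contained you would still have to execute that Peierls computation, but it is the same argument the paper uses, so this is a matter of completeness rather than of approach.
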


\begin{proof}

We first show that $ T^i_{stop} < +\infty $ almost surely for sufficiently large $\delta. $
We observe that we can upper bound $ | C_t^i | $ (where $ | C_t^i | $ is the cardinal of $ C_t^i $ ) by $ Z_t $ almost surely for all $t \geq  0 $ where $ Z_0=1 $ and $ (Z_t)_{t \geq 0} $ is a branching process. With a rate $ n(\beta^* - \beta_*) $ the transition from $ Z_t $ is from $ n $ to $ n+1 $ and with a rate $ n \beta_* $ this transition is from $ n $ to $ n-1. $ 

We can therefore define for any bounded test function $ f, $ the associated infinitesimal generator of $ (Z_t)_{t\geq 0} $ as follows : $$Af(n) = n [ (\beta^*-\beta_*) \left(  f(n+1)-f(n) \right) + \beta_* \left( f(n-1) - f(n) \right) ]. $$
Take $ f(n) = n, $ we obtain : $$ Af = f [(\beta^* - \beta_* ) - \beta_* ] = f (\beta^* - \beta_*)(1- \delta). $$ Then, for $\delta >1, $ we have $ Af (n) = - c f(n) $ where $ -c= (\beta^* - \beta_*)(1- \delta). $ Assuming $ x_t =  \mathbb{E} ( f(Z_t) ) $ and using the Itô formula, we have: $$ x_t =  x_0 + \mathbb{E} \int_0^t Af(Z_s) ds = x_0 -c \int_0^t x_s ds = x_0 e^{-ct}. $$ Therefore, when $t \to \infty, $ we have $ x_t \to x_0 = 1. $ Which implies that if $\delta > 1, \, \mathbb{P} (T^i_{stop} < \infty ) \geq \mathbb{P} (\lim_{t \to \infty} Z_t = 0) = 1 $ thus ensuring that $\delta_c < 1. $ \\

We now show that for all $\delta<\delta_c ,  \, T^i_{stop} = +\infty $ with positive probability.

For this proof, we will use the classical graphical construction of $C^i_t$ (see Ferrari et al \cite{FGGL}, Griffeath \cite{Gri}).  We work within the space-time diagram $\mathbb{Z} \times [0,\infty[. $ For each $i \in \mathbb{Z}, $ we consider $ N_t^{i,s} \text{ and } N_t^{i,p}$ two independent Poisson processes with respective intensities $\beta_* $ and $\beta^*-\beta_*. $ The jump times of $ N_t^{i,s} $ and $N_t^{i,p} $ are respectively $T_n^{i,s} $ and $T_n^{i,p} $ for the neuron $i $ after $n $ jumps.

For each $i \in \mathbb{Z}, $ we draw graphical sequences as follows.
First draw arrows pointing from $(i-1, T_n^{i,p}) $ to $(i, T_n^{i,p}) $ and from $(i+1, T_n^{i,p}) $ to $(i, T_n^{i,p}) $ for all $n\geq 1, \, i\in \mathbb{Z}. $ Second, $\delta $'s at all $(i, T_n^{i,s}), $ for all $n\geq 1, \, i \in \mathbb{Z}. $ We also suppose that time is going up which implies that we thus obtain a random graph $\mathcal{P}. $ Let us say that there is a chain of vertical upward and horizontal directed edges in the random graph that leads from $(i, 0) $ to $ (j,t) $ ( with $ j \in \{i+1, i-1\} $) without passing through a $\delta .$ 
Notice that $C_t^i $ is the set of the clan of ancestors of site $(i, t), $ that is $$ C_t^i = \{ j: \text{ there is a path from } (i,0) \text{ to } (j,t) \text{ for } j=i \pm 1\}. $$
It is obvious to notice that $T^i_{stop} < \infty $ if and only if $\bar{\mathcal{C} }^i= \cup_{t\geq 0} C_t^i $ is a finite set. We will therefore show that $ \mathbb{P}(T^i_{stop} < \infty) = \mathbb{P}(|\bar{\mathcal{C}}^i| < \infty) < 1 $ for sufficiently  small values of $\delta $ using  classical contour techniques. (see Griffeath \cite{Gri}.)

For this, on $|\bar{\mathcal{C}}^i| < \infty, $ we draw the contour of $\bar{\mathcal{C}}^i $ as follow. 

Starting from $ (i-\frac{1}{2},0). $ Let $\Gamma $ be a possible path of the graph $\mathcal{P}. $ $\Gamma $  consists of $ 4n $  alternating vertical and horizontal edges for some $n \geq 1 $ which we encode as a succession of direction vectors $ (D_1, \cdots , D_{2n} ). $  Each of the $ D_i $ can be one the seven triplets $$ dld, drd, dru, ulu, uru, urd, dlu, $$ where $d,  u,  l \text{ and }  r $ stand for down, up, left and right, respectively. Note that $ uld $ cannot occur in a possible path $\Gamma $ because the direction of $ uld $ is counter-clockwise.
 We  start at $( i-\frac{1}{2},0) $ and move clockwise around the curve.

The two figures below show examples of possible paths for $n = 3 $ and $n= 4. $ Figure.1 shows a possible path with $ n=3 $  and in this case we have $$ \Gamma : ulu, ulu, urd, drd, drd, dlu. $$ For $ n=4, $ Figure.2 gives  $$ \Gamma : ulu, ulu, urd, drd, dru, urd, dld, dlu. $$ 

\begin{center}
\includegraphics[scale=0.35]{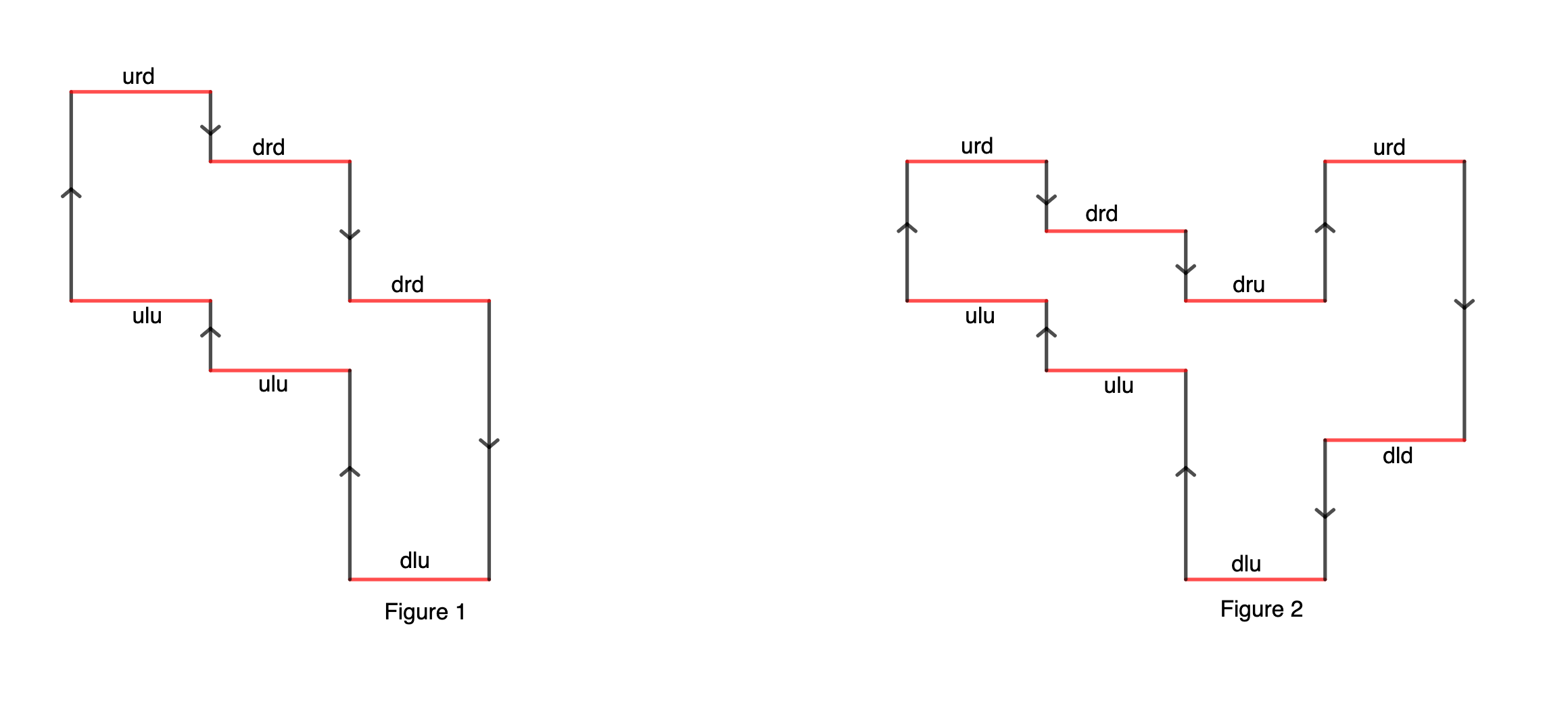}
\end{center}

Writing $ N(dld), N(drd), \cdots $ for the number of appearances of the different direction vectors, we have that $ N(dlu) = 1 $ ($ dlu $ is the last triplet of which appears exactly one single time) and $$ N(dru)= N(urd)-1 \leq n/2,  \  N(drd) + N(dru) + N(uru) + N(urd) =n  .$$ (for more details, see Ferrari et al. \cite{FGGL}.)

We first observe that the occurrence of either $uru, \, urd, \text{ or } drd $ can be upper bounded by $\delta. $ This is due the fact that the probability associated with $ uru \text{ or } drd $ is $\frac{\delta}{1+ 2\delta}$ and that of $ urd $ is $\frac{\delta}{2+ \delta}$. In the same way, we observe that the occurrence of either $ dld, \, ulu \text{ or } dlu $ can be upper bounded by $1. $ Indeed, the associated probability with its directions is $\frac{1}{1+ \delta}.$ Therefore we obtain the following list of upper bounds
\begin{eqnarray*}
uru \text{ occurs with probability at most }  \delta \\
urd \text{ occurs with probability at most }  \delta \\
drd \text{ occurs with probability at most }  \delta \\
dru \text{ occurs with probability at most } 1 \\
dld \text{ occurs with probability at most } 1 \\
ulu \text{ occurs with probability at most }  1 \\
dlu \text{ occurs with probability at most }  1.
\end{eqnarray*}

In the above list, we have upper bounded the probability  associated with $dru $ which is given by $\frac{\delta}{3 \delta}=\frac{1}{3}, $ by 1. \\
For a given contour having $ 4n $ edges, with $ n \geq 2 $, its probability is therefore upper bounded by $$ \delta^{ N(drd)+N(uru) +N(urd)}= \delta^ {n-N(dru)} \leq \delta^{n/2}. $$

Indeed, for each triplet we have $4 $ possible choices. The first entry of a given triplet is always fixed by the previous triplet in the sequence, and for the first triplet $D_1 $ the first entry is always $u.$

Then, for $n = 1, $  the probability of appearance of a contour of length $4$ is equal to $\mathbb{P}(D_1=urd)= \frac{\delta}{2+ \delta} \leq \delta.$\\
We also have, for  $ n = 2, $ the probability of appearance of a contour of length $8$ is equal to 
\begin{multline*}
 \mathbb{P}(D_1 = ulu, \ D_2 = urd, \ D_3 = drd) + \mathbb{P}(D_1 = ulu, \ D_2 = uru, \ D_3 = urd) + \\
  \mathbb{P}(D_1 = uru, \ D_2 = urd, \ D_3 = dld) + \mathbb{P}(D_1 = urd, \ D_2 = drd, \ D_3 = dld) \leq 4 \delta^2. 
 \end{multline*}

\begin{remark}
In the above probabilities, we have not put the direction $ D_4 = dlu $ because it is a certain direction. It is common to all possible paths and its probability of occurrence is 1.
\end{remark}

Therefore, a very approximate upper bound on the total number of possible triplets $ (D_1, \cdots , D_{2n}) $ is given by $ 4^{2n} = 16^n. $ We get for all $\delta < \frac{1}{(16)^2}, $
$$ \mathbb{P}(T^i_{stop} < \infty ) \leq \delta + 4 \delta^2 + \sum_{n \geq 3}(16 )^n \delta^{n/2} =  \delta + 4 \delta^2 +  \frac{(16 \sqrt{\delta})^3}{1-16 \sqrt{\delta}}.  $$

We set $\phi: \delta \mapsto \phi(\delta) = \delta + 4 \delta^2 + \frac{(16 \sqrt{\delta})^3}{1-16 \sqrt{\delta}}. $ Then, $  \mathbb{P}(T^i_{stop} < \infty ) \leq \phi(\delta). $ \\
As $\delta \to 0, \,  \phi(\delta) \to 0 $ which implies that there exists $ \delta_c $  such that $ \phi(\delta_c) = 1. $ As a consequence, $  \mathbb{P}(T^i_{stop} < \infty ) <1, \, \forall \, 0 < \delta < \delta_c. $

We therefore conclude that $ \delta_c $ exists and $0 < \delta_c <1.$

\end{proof}

\subsection{Some simulations}

We simulate the state $X_0(i) $ in the stationary regime for a fixed neuron $i \in \mathbb{Z} $ at time $0 $ and estimate its density. The main purpose of this simulation is to have an idea about the theoretical distribution of $X_0(i) $ in its stationary regime and whether this distribution is impacted by the specification of $F^i.$ 

We denote by $\mathcal{D} $ the set of neurons which belong to a clan of ancestors of neuron $i $ at a time $t$ or to its neighborhood.

To do this, we apply the following algorithm:

\begin{enumerate}
\item Initialize the family $ \mathcal{V}_{. \to i} $ of non empty neighborhoods of the neuron $i$\\
\item Initialize  $C_0^i =i $ the clan of ancestors  of neuron $i$ at time $t=0. $ \\
\item  For all time $t >0$ we let $C_t^i $ the clan of ancestors of neuron $i$ at time $t$\\
\item While $ | C_t^i | >0 $ (where $| C_t^i| $ denotes the cardinality of $C_t^i$) do 

 -Determine the next jump time $ t_{next} > t $ in the clan of ancestors of neuron $i $ at time $t_{next} $ and in $ \partial_{ext} (clan) $, the correspondant neuron $j $ and the nature of jump
 
- If neuron $j \in C_t^i $ and the jump is sure, then  $C_{t_{next}}^i = C_t^i \setminus \{i\} $ 
 
- If $j \in  C_t^i $ and the jump is possible $C_{t_{next}}^i = C_t^i $ 
 
- If $j \in V(C_t^i) $ (where $V(C_t^i) := \cup_{j \in C_t^i} \mathcal{V}_{. \to j} $) and the jump is sure, then $C_{t_{next}}^i = C_t^i $ 
 
- If $j \in V(C_t^i) $ and the jump is possible $C_{t_{next}}^i = C_t^i \cup \{j\} $
 
- We update $ t \leftarrow t_{next}$

end While.\\

\item We determine the chronological list of the different jump times from $0 $ to the last time which makes the clan empty. 

- For each of these jump times, we indicate the associated neuron and the nature of the jump.

- If the jump is sure, we simulate a random state following a distribution $F^i$ at the neuron associated with this jump time. \\

\item We set $m = \infty$. While $m >0 $ do

- Let $m $ be the rank of the last possible jump time $T_m $ of $\mathcal{D}$  in the chronology of jump times. Let $k $ be the neuron associated with this jump.\\

\item We determine the rank $r $ of the last certain jump time $T_r > T_m $ of $k$ in the chronology of jump times.  The state of $k$ is determined recursively from its state at time $T_r$ to its state at time $T_m$ as follows:

- For $s  \in \{1 ,\cdots, r-m-1\}$ let  \text{ $ x=$ state of $k$  at time $T_{r-s+1} $}  . 

-  Let $dt = T_{r-s+1} - T_{r-s} $ and $j$ the neuron associated with the jump time $T_{r-s}. $ The state of $k$ at time $T_{r-s} $ is $ x_{-dt}^k(x)+W_{j \to k}*1_{\{\text{sure  jump  of  $j $  at $ T_{r-s}$} \} } $ with $ W_{j \to k} $ the inhibition weight of $j $ on $k$. 

- We determine rather the occurence is effective or not of the jump of $k$ at time $T_m$ thanks to its state at time $T_{m+1}$. \footnote{The jump occurs with a Bernoulli distribution with parameter $(\beta(x_{-dt}^k(x)) -\beta_*)/(\beta^*-\beta_*)$}

- If the jump is effective, we simulate a random state for $k$ at time $T_m$  following a distribution $F^i$. Otherwise, we determine the state of $k $ at time $T_m $ as $x_{T_{m}-T_{m+1}}(x) $ where \text{$x=$ state of $k$ at time $T_{m+1}.$}
 Let $m$ be the new rank of the last possible jump time of $\mathcal{D}$ and repeat the procedure.
 
end While.

\begin{remark}
After this step, we know the exact nature of all jumps.
\end{remark}

\item Determine for neuron $i$ its first safe jump time $T_n$ where $n$ is the rank of this time in the chronology of jump times.\\

\item The state of neuron $i$ is determined recursively from its state at time $T_n$ to $T_0$ as follows:

- For $s \in \{1, \cdots, n-1 \} $ let \text{$ x=$ state of neuron $i$  at time $ T_{n-s+1}.$ }

-  Let $dt = T_{n-s+1} - T_{n-s} $ and $j$ the neuron associated with the jump time $T_{n-s}. $ The state of neuron $i$ at time $T_{n-s} $ is $ x_{-dt}^i(x)+W_{j \to i}*1_{\{sure\  jump\ of \ j \ at \  T_{r-i} \}} $ with $ W_{j \to i} $ the inhibition weight of $j $ on $i$. 

\begin{remark}
The last value determined is the initial state of neuron $i.$
\end{remark}
\end{enumerate}

In the three following examples we consider $\alpha_i(x)=x,$ $\beta_i(x)= 3+\mathbf{1}_{\{x \le 2\}}, $ $W_{i \to j}=1.$ To verify if the distribution of inhibition state depends on the distribution $F^i, $ we consider three different distributions for $F^i $ that are $ \mathcal{E}(1), $  $ \mathcal{E}(10) $ and $ 0.5 \delta_1+0.5 \delta_2. $  We simulate, with the algorithm described above $N=1000 $ values for the inhibition state . We then estimate non parametrically the distribution of the inhibition state in these three cases of distribution $F^i $ and we compare them.

The stationary distribution of the process in the three following cases seems to be continuous. We do not provide a proof here, this is outside the scope of this paper.

\begin{figure}[h]
    \begin{minipage}[c]{.46\linewidth}
        \centering
        \includegraphics[scale=0.4, height=6cm, width=7cm]{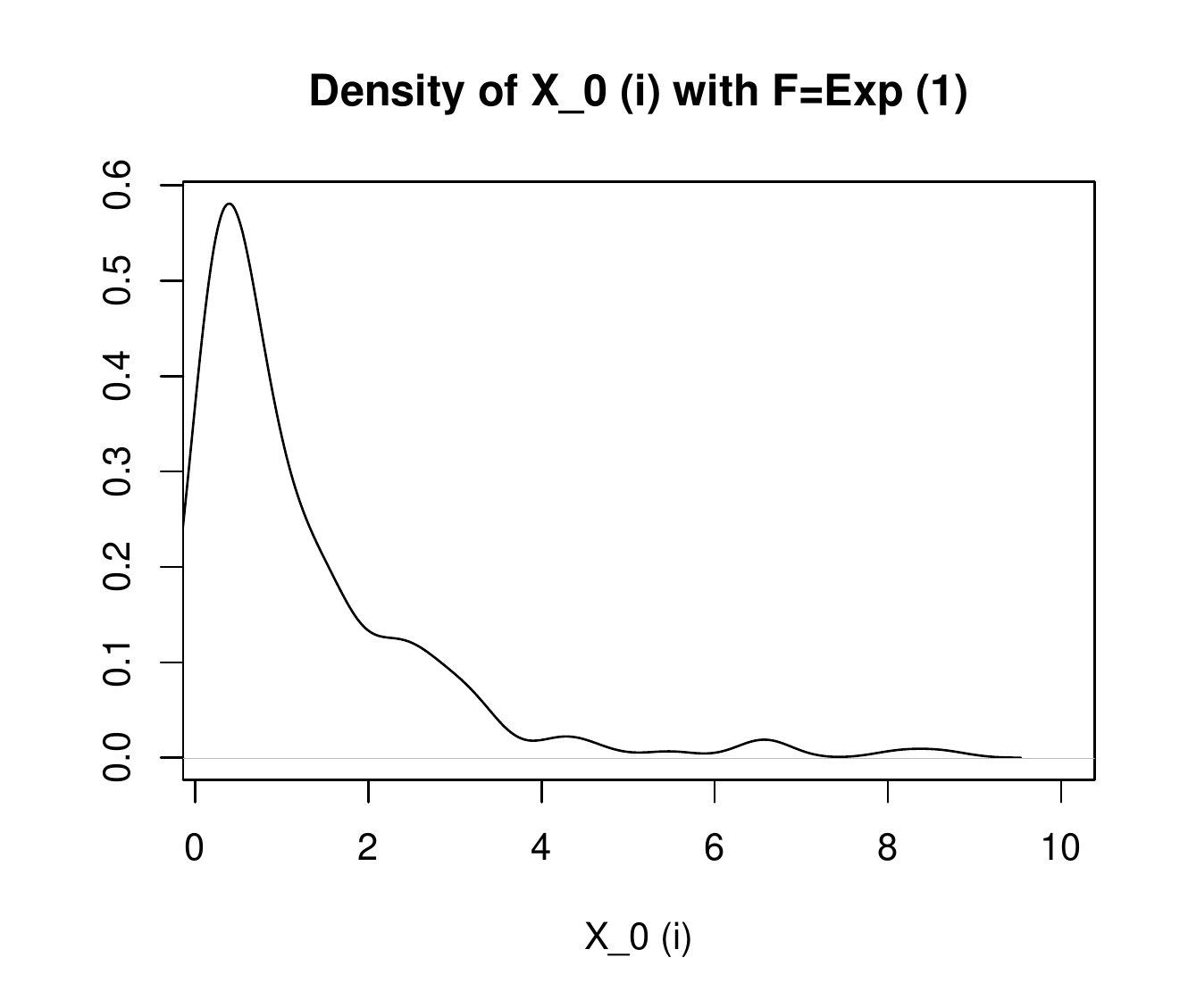}
        \caption{}
    \end{minipage}
    \hfill%
    \begin{minipage}[c]{.46\linewidth}
        \centering
        \includegraphics[scale=0.4, height=6cm, width=7cm]{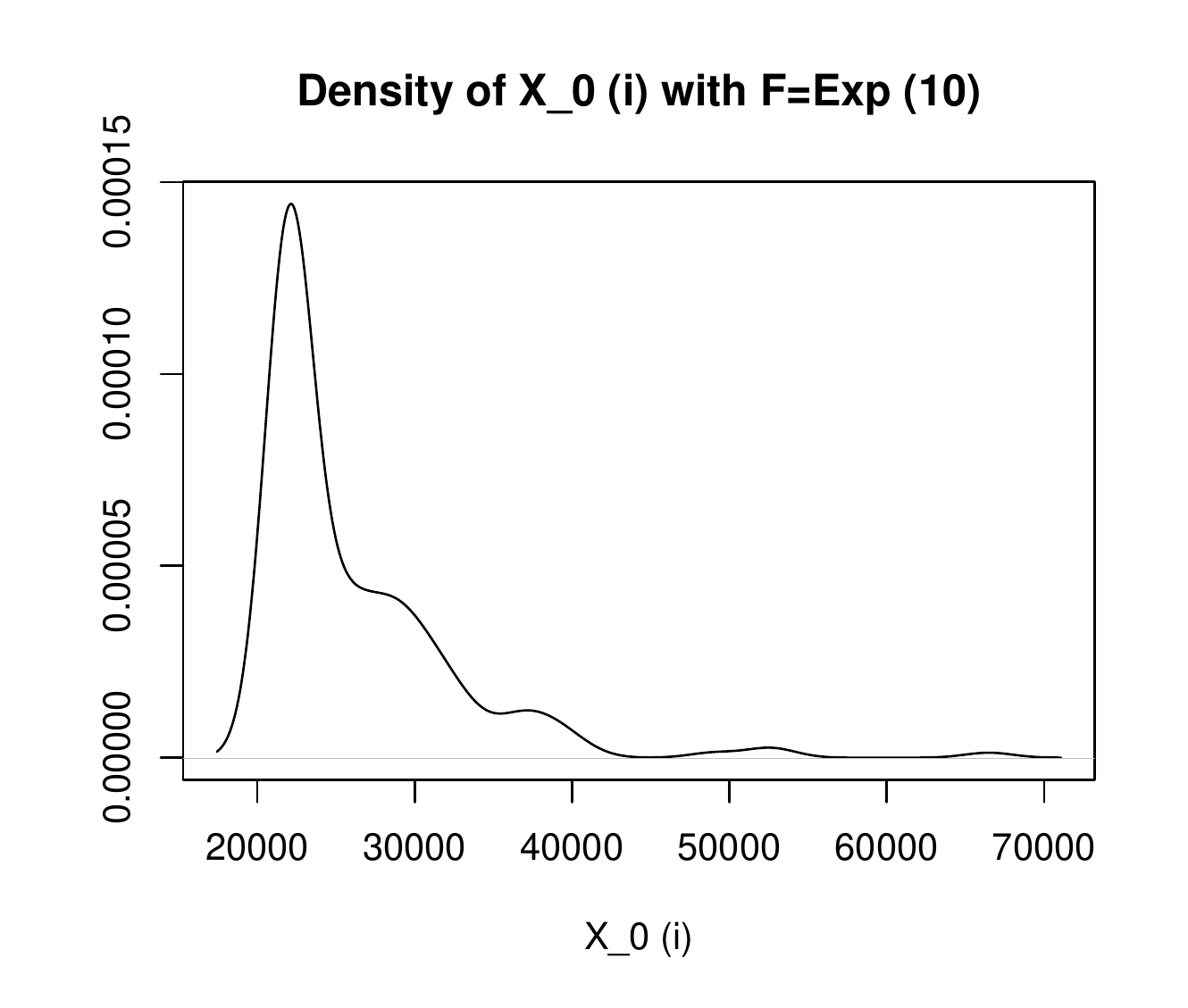}
        \caption{}
    \end{minipage}
\end{figure}

We can remark that the distribution of state of inhibition $X_0(i) $ in stationary regime is concentrated in the interval $(0,4) $ when $F^i = \mathcal{E}(1) $ whereas this distribution is rather concentrated on the interval $(20000, 40000) $ when $F^i = \mathcal{E}(10). $ This shows that these two distributions of state $X_0(i) $ are different.

\begin{center}
	\includegraphics[scale=0.52]{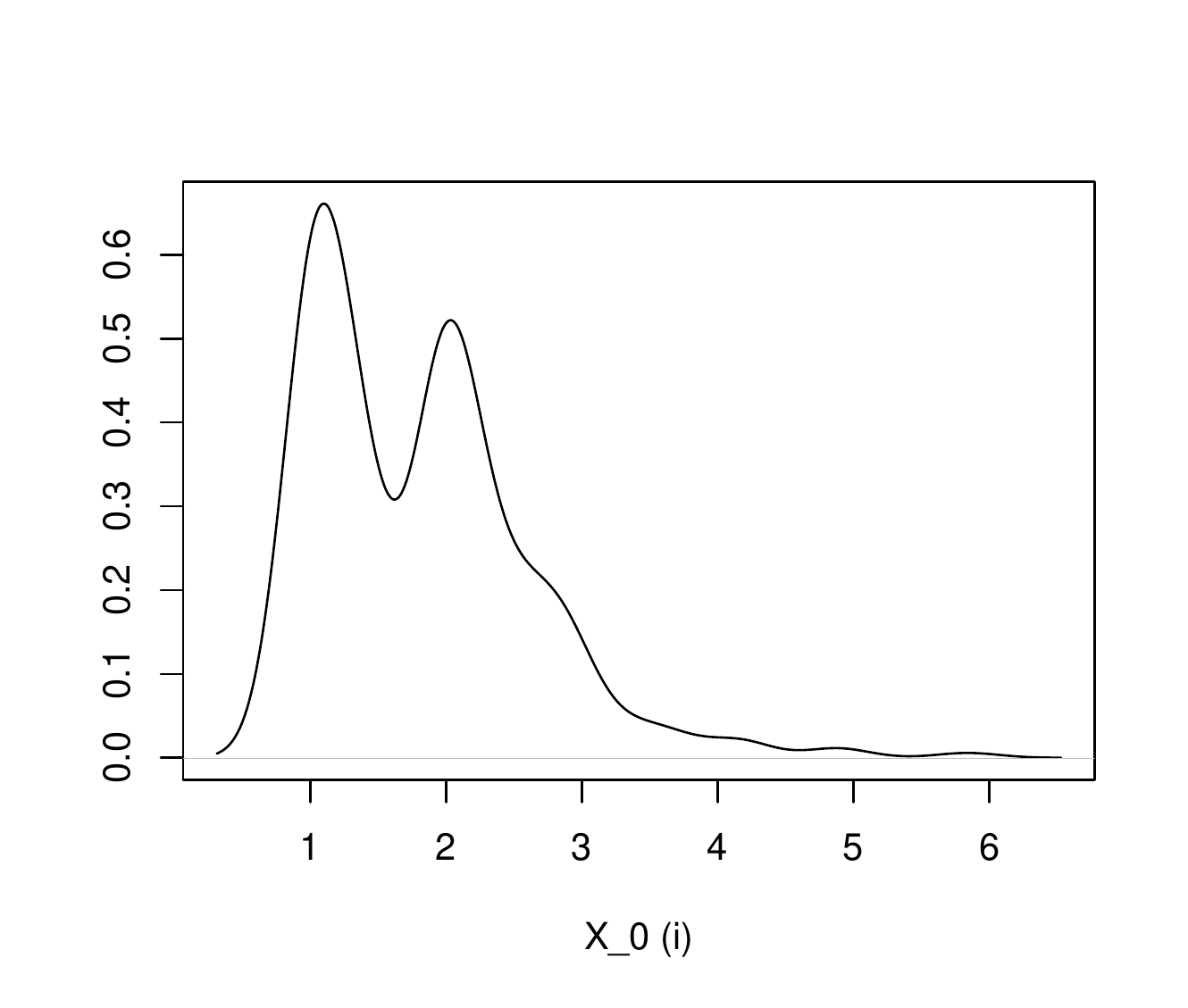}
\end{center}

In this example, the distribution of the state of inhibition $X_0(i) $ in stationary regime seems to be continuous although $F^i $ is discrete.  We do not provide a proof here, this is outside the scope of this paper.  We observe two local extrema at $1$ and $2 $ which are linked to the jumps because of the Dirac. These extrema suggest that jumps are very frequent in this process.
 
\textbf{Acknowledgments:}\\
The author thanks Eva L\"{o}cherbach for the many discussions that led to this version of the paper. This research was conducted within the part of the Labex MME-DII(ANR11-LBX-0023-01)  project and the CY Initiative of Excellence (grant "Investissements d'Avenir" ANR-16-IDEX-0008), Project EcoDep PSI-AAP 202-00000013

\end{document}